\newcommand{\eps}{\varepsilon}
\renewcommand{\phi}{\varphi}
\renewcommand{\le}{\leqslant}
\renewcommand{\ge}{\geqslant}
\renewcommand{\mid}{:}
\newcommand{\PP}{\mathbb{P}}
\newcommand{\R}{\mathbb{R}}
\newcommand{\aaa}{\mathcal{A}}
\newcommand{\bb}{\mathcal{B}}
\newcommand{\ff}{\mathcal{F}}
\newcommand{\setCenter}{M}
\newcommand{\residualSet}{R}
\newcommand{\xv}{\mathbf{x}}
\newcommand{\yv}{\mathbf{y}}
\newcommand{\cv}{\mathbf{c}}
\newcommand{\nv}{\mathbf{n}}
\newcommand{\support}{\operatorname{supp}}
\newcommand{\probabilisticCenter}{C^*}
\newcommand{\mSet}{\mathbf{m}}
\newcommand{\solution}{\zeta^*}
\newcommand{\entropy}{\mathbb{H}}
\newcommand{\targetFunction}{s^*(n, \ell, \mSet)}
\newtheorem{theorem}{Theorem}
\newtheorem{problem}{Problem}
\newtheorem{claim}[theorem]{Claim}
\newtheorem{lemma}[theorem]{Lemma}
\newtheorem{corollary}[theorem]{Corollary}
\newtheorem{proposition}[theorem]{Proposition}
\newtheorem{definition}{Definition}
\title{Octopuses in the Boolean cube: families with pairwise small intersections, part I}
\author{
    Andrey Kupavskii\footnote{G-SCOP, CNRS, University Grenoble-Alpes, France and Moscow Institute of Physics and Technology, Russia; Email: {\tt kupavskii@ya.ru}},
    Fedor Noskov\footnote{Moscow Institute of Physics and Technology, Skolkovo Institute of Science and Technology, Higher school of Economics, Russia}
}
\begin{document}

 \maketitle

\begin{abstract}



Let $\ff_1, \ldots, \ff_\ell$ be families of subsets of $\{1, \ldots, n\}$. Suppose that for distinct $k, k'$ and arbitrary $F_1 \in \ff_{k}, F_2 \in \ff_{k'}$ we have $|F_1 \cap F_2|\le m.$  What is the maximal value of $|\ff_1|\ldots |\ff_\ell|$?
In this work we find the asymptotic of this product as $n$ tends to infinity for constant $\ell$ and~$m$.

This question is related to a conjecture of Bohn et al. that  arose in the 2-level polytope theory and asked for the largest product of the number of facets and vertices in a two-level polytope. This conjecture was recently resolved by Weltge and the first author.

The main result can be rephrased in terms of colorings. We give an asymptotic answer to the following question. Given an edge coloring of a complete $m$-uniform hypergraph into $\ell$ colors, what is the maximum of $\prod M_i$, where $M_i$ is the number of monochromatic cliques in $i$-th color?
\end{abstract}



\section{Introduction}

A polytope $P \subset \R^d$ is called \textit{2-level} if for each facet $F$ there are two parallel hyperplanes $H,H'$ such that $F\subset H$ and all vertices of $P$ are contained in $H\cup H'$ 
Several standard polytope families are $2$-level, e.g. hypercubes, cross-polytopes, simplices. The class of $2$-level polytopes includes a number of important polytopal families like Hanner polytopes, Birkhoff polytopes, the Hansen polytopes and others~\cite{bohn2019enumeration}. These polytopes  arise in such areas of mathematics as the semidefinite programming, communication complexity and polyhedral combinatorics.

A number of authors studied combinatorial structure of 2-level polytopes~\cite{fiorini2016two, aprile20182, bohn2019enumeration, Kupavskii2020}. Bohn et al. \cite{bohn2019enumeration} suggested a beautiful conjecture on the tradeoff between the number of vertices $f_0(P)$ and the number of $d-1$-dimensional facets $f_{d-1}(P)$ of a 2-level polytope $P \subset \R^d$. Concretely, they asked if it is true that $f_0(P) f_{d-1}(P) \le d 2^{d + 1}$ for all $d$. This bound is sharp for cubes and cross-polytopes.  Recently, Kupavskii and Weltge answered this question in the positive~\cite{Kupavskii2020}. Actually, they proved the following variation of the conjecture of Bohn et al., from which it is easy to deduce the original conjecture. For two vectors ${\bf a},{\bf b}\in \R^n$ let $\langle {\bf a},{\bf b}\rangle$ stand for their scalar product.
\begin{theorem}[\cite{Kupavskii2020}]
\label{theorem: kup-weltge}
Let $\aaa$, $\bb$ be families of vectors in $\R^n$ that both linearly span $\R^n$. Suppose that $\langle {\bf a},{\bf b}\rangle\in\{0, 1\}$ holds for all ${\bf a}\in\aaa$, ${\bf b}\in\bb$. Then we have $|\aaa|\cdot|\bb| \le (n + 1) 2^n$.
\end{theorem}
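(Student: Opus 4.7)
The plan is to prove the theorem by induction on the ambient dimension $n$, after first casting the hypothesis in matrix form. Arrange the inner products into a $|\aaa| \times |\bb|$ matrix $M$ with entries $M_{\mathbf{a},\mathbf{b}} = \langle \mathbf{a}, \mathbf{b}\rangle \in \{0,1\}$. Since $M = A B^{\top}$, with $A, B$ having as rows the vectors of $\aaa$ and $\bb$, the matrix $M$ has rank at most $n$, and the spanning hypotheses translate into saying that the rows and the columns of $M$ are pairwise distinct: if two rows coincide then $\mathbf{a} - \mathbf{a}'$ is orthogonal to every vector of $\bb$, hence zero. So the theorem reduces to bounding the product of the numbers of rows and columns of a $\{0,1\}$-matrix of rank at most $n$ with pairwise distinct rows and columns.

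A first bound drops out easily by picking any $n$ vectors $\mathbf{b}_{i_1}, \dots, \mathbf{b}_{i_n} \in \bb$ forming a basis of $\R^n$: the map $\mathbf{a} \mapsto (\langle \mathbf{a}, \mathbf{b}_{i_j}\rangle)_{j=1}^{n} \in \{0,1\}^n$ is injective, yielding $|\aaa| \le 2^n$, and symmetrically $|\bb| \le 2^n$. This already settles the theorem whenever $\min(|\aaa|, |\bb|) \le n+1$, and it illustrates why the extremal configuration---vertices of the cube paired with the standard basis together with $\mathbf{0}$---is as large as it is. The task is to close the gap between this weak bound and the sharp $(n+1)\cdot 2^n$.

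For the inductive step, choose some $\mathbf{b}_0 \in \bb \setminus \{\mathbf{0}\}$ and split $\aaa = \aaa_0 \sqcup \aaa_1$ by the value of $\langle \mathbf{a}, \mathbf{b}_0\rangle \in \{0,1\}$. Then $\aaa_0$ lies in the linear hyperplane $H := \mathbf{b}_0^{\perp}$, and $\aaa_1$ lies in a parallel affine hyperplane. Orthogonally project $\bb \setminus \{\mathbf{b}_0\}$ onto $H$ to obtain $\bb' \subset H$; this family still spans $H$, and for $\mathbf{a} \in \aaa_0$ one has $\langle \mathbf{a}, \mathbf{b}\rangle = \langle \mathbf{a}, \mathbf{b}'\rangle$, so the pair $(\aaa_0, \bb')$ (after deduplication) falls under the inductive hypothesis in $\R^{n-1}$, giving $|\aaa_0|\cdot|\bb'| \le n\cdot 2^{n-1}$.

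The main obstacle is to handle $\aaa_1$ symmetrically. Translating $\aaa_1$ by some $\mathbf{a}^* \in \aaa_1$ into $H$ turns the inner products with $\bb$ into values in $\{-1, 0, 1\}$ rather than $\{0, 1\}$, so the induction does not apply verbatim. The natural remedy is to strengthen the inductive statement so that, for each $\mathbf{b} \in \bb$, the allowed inner products form any fixed two-element arithmetic progression $\{c_{\mathbf{b}}, c_{\mathbf{b}} + 1\}$; this condition is invariant under translating $\aaa$ and is preserved by orthogonal projection of $\bb$ onto $H$. Granting this stronger induction and checking that $\mathbf{b}_0$ can be chosen so that distinct elements of $\bb \setminus \{\mathbf{b}_0\}$ have distinct projections (so $|\bb'| \ge |\bb| - 1$), the bound assembles by expanding
\[
|\aaa|\cdot|\bb| \le (|\aaa_0| + |\aaa_1|)(|\bb'| + 1) = |\aaa_0|\cdot|\bb'| + |\aaa_1|\cdot|\bb'| + |\aaa|,
\]
and applying $|\aaa_i|\cdot|\bb'| \le n \cdot 2^{n-1}$ together with the preliminary bound $|\aaa| \le 2^n$, which gives exactly $n\cdot 2^n + 2^n = (n+1)\cdot 2^n$. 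I expect the delicate part to be formulating the strengthened inductive statement cleanly---so that it survives both projection and translation---and establishing that $\mathbf{b}_0$ can be chosen so that the projection step preserves enough distinctness in $\bb'$.
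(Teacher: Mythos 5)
This theorem is imported from \cite{Kupavskii2020}; the paper only restates it, and proves here only the special case $\aaa,\bb\subset\{0,1\}^n$ (Theorem~\ref{theorem: Frankl's theorem}, Frankl's argument via Harris--Kleitman). So there is no internal proof to compare against, but the induction you sketch does not close. The central gap is that the strengthened inductive statement you propose --- that for each $\mathbf b\in\bb$ the inner products $\langle\mathbf a,\mathbf b\rangle$ lie in a $\mathbf b$-dependent two-element progression $\{c_{\mathbf b},c_{\mathbf b}+1\}$, with the same bound $(n+1)2^n$ --- is in fact false. Take $\aaa=\{\mathbf e_1,\dots,\mathbf e_n\}$ and $\bb=\bigcup_{t=0}^{T-1}\bigl(\tfrac{t}{T}\mathbf 1+\{0,1\}^n\bigr)$; both span $\R^n$, the $T$ translates are pairwise disjoint, and the hypothesis holds with $c_{\mathbf b}=t/T$ on the $t$-th translate, yet $|\aaa|\cdot|\bb|=nT\cdot 2^n$, which already exceeds $(n+1)2^n$ for $T=n=2$ and grows without bound in $T$. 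The original $\{0,1\}$ problem secretly uses that $c_{\mathbf b}$ is \emph{uniform} in $\mathbf b$: picking a basis $\mathbf a_1,\dots,\mathbf a_n\in\aaa$ injects $\bb$ into $\{0,1\}^n$, which is what gives $|\bb|\le 2^n$; translating $\aaa_1$ destroys exactly that uniformity, so the ``natural remedy'' is not actually available.

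A second obstruction is the requirement $|\bb'|\ge|\bb|-1$ that your final expansion $|\aaa|\cdot|\bb|\le(|\aaa_0|+|\aaa_1|)(|\bb'|+1)$ needs. It cannot be arranged by choosing $\mathbf b_0$: in the extremal configuration $\bb=\{0,1\}^n$, for every $\mathbf b_0\in\bb\setminus\{\mathbf 0\}$ there exist pairs $\mathbf b,\,\mathbf b-\mathbf b_0$ both lying in $\bb$, so orthogonal projection onto $\mathbf b_0^{\perp}$ identifies them and $|\bb'|$ drops to about $2^{n-1}$, far below $2^n-1$. (A third, smaller gap: nothing guarantees that $\aaa_0$, or a translate of $\aaa_1$, spans the hyperplane, which the inductive hypothesis requires.) The hyperplane-induction framing is a reasonable first instinct, but the proof in \cite{Kupavskii2020} is substantially more involved, and repairing these gaps would essentially require rebuilding the mechanism there.
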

The bound in the theorem is tight since one can take $\aaa = \{\textbf 0, {\bf e}_1, \ldots, {\bf e}_n\}$ and $\bb = \{0, 1\}^n$.

Some of the previous works dealt with the particular case of Theorem~\ref{theorem: kup-weltge} when $\aaa,\bb\subset \{0,1\}^n$. The problem is then much simpler. Actually, we will present a very short and elegant argument due to Peter Frankl that proves Theorem~\ref{theorem: kup-weltge} in the $\{0,1\}$ case.

In this work, we provide the generalization of Theorem~\ref{theorem: kup-weltge} on $\{0,1\}^n$ to several families. Compared to the two families case for $\{0,1\}^n$, this problem becomes much more challenging, and it seems almost hopeless to determine the exact extremal function. The proofs involve some interesting ingredients, such as correlation inequalities for several families. We will say more on these points after we introduce the necessary notation and formulate the main result.
In what follows, we will work with families of sets instead of families of $\{0,1\}$-vectors.


\subsection{Notation}

Put $[n] = \{1, \ldots, n\}$, and, more generally, $[a, b] = \{a, a + 1, \ldots, b\}$ for positive integers $a,b,n$. Given a set $X$, we denote by $2^{X}$ the set of all subsets of $X$. We denote by ${X\choose k}$ (${X\choose \le k}$) the family of all subsets of $X$ of cardinality $k$ (at most $k$). 
We also denote $\binom{n}{\le m} := \left| \binom{[n]}{\le m} \right| = \sum_{t = 0}^m \binom{n}{t}$.

In this paper, we study families of sets with  the ``$\mSet$-\textit{overlapping property}'', which is defined below.
\begin{definition}
Fix a positive integer $\ell$. Let $\mSet = (\mSet_S)_{S \in \binom{[\ell]}{2}}$ be a vector of non-negative integers indexed by unordered pairs $\{k, k'\} \in {[\ell]\choose 2}$. For simplicity we suppress brackets in $\mSet_{\{k, k'\}}$ and assume that $\mSet_{k, k'}$, $\mSet_{k', k}$, and $\mSet_{\{k, k'\}}$ identify the same entry. Families $\ff_1, \ldots, \ff_{\ell}\subset 2^{[n]}$ {\em satisfy an $\mSet$-overlapping property} if for any distinct $k_1, k_2 \in [\ell]$ and any sets $F_1 \in \ff_{k_1}$, $F_2 \in \ff_{k_2}$ we have
\begin{align*}
    |F_1 \cap F_2| \le \mSet_{k_1, k_2}.
\end{align*}
If $\mSet_{k_1, k_2} = m$  for all pairs $k_1,k_2$  then the property is referred to as {\em $m$-overlapping}, and {\em overlappling} if additionally $m=1$.
\end{definition}

\subsection{Problem statement and results}

In this work, we address the following problem.
\begin{problem}
\label{problem: main}
Let $n, \ell$ be positive integers, $\mSet$ be a vector of $\binom{\ell}{2}$ non-negative integers and $\ff_1, \ldots, \ff_\ell \subset 2^{[n]}$ be families with the $\mSet$-overlapping property. What is the maximal value $s^*(n, \ell, \mSet)$ of the product $|\ff_1|\cdot\ldots\cdot |\ff_\ell|$?
\end{problem}

If all coordinates of $\mSet$ are equal to $m$, we denote $s^*(n, \ell, m) := s^*(n, \ell, \mSet)$.

It is easy to see that $s^*(n, \ell, 0) = 2^n$: indeed, supports of sets in distinct families are disjoint. Recently, Aprile, Cevallos, and Faenza \cite{aprile20182} showed that $s^*(n, 2, 1) = (n + 1) 2^n$. In personal communication, Peter Frankl \cite{Frankl} gave a simple and elegant proof that $s^*(n, 2, t) = 2^n\sum_{i=0}^t \binom{n}{i}$ using Harris--Kleitman correlation inequality (we present his proof in Theorem~\ref{theorem: Frankl's theorem}). In \cite{ryser1974subsets}, Ryser studied a similar question for one family. In particular, he showed that if for $n\notin\{9, 10\}$ $n$ sets of size at least 3 intersect each other in at most 1 element, then they form either a finite projective plane or a symmetric group divisible design.

\medskip

The main result of this paper is the following theorem:

\begin{theorem}
\label{theorem: maximal families structure}
Let $\ell, $ be positive integers and let $\mSet$ be a vector of integers as above.  Then, as $n\to \infty$, we have  the following. 
\begin{align}
\label{eq: target function asymptotics}
    \targetFunction =
    \left (1 + O \left (\frac{1}{\sqrt{n}} \right ) \right ) \cdot
    2^n\cdot
    \prod_{S\in{[\ell]\choose 2}}
        \bigg(
            \frac 1{\mSet_S!}
            \Big(
                \frac{\mSet_S \cdot n}{
                    \sum_{S'\in {[\ell]\choose 2}}
                        \mSet_{S'}
                }
            \Big)^{\mSet_S}
        \bigg).
\end{align}
\end{theorem}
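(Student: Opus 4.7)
My plan is to prove Theorem~\ref{theorem: maximal families structure} in two directions: an explicit construction for the lower bound, and a matching asymptotic upper bound where the main technical work lies.

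\textbf{Lower bound.} I partition $[n]$ into parts $(A_S)_{S \in \binom{[\ell]}{2}}$ of sizes $|A_S| = \lfloor \mSet_S n / M \rfloor$, where $M := \sum_{S' \in \binom{[\ell]}{2}} \mSet_{S'}$, distributing the $O(1)$ leftover elements arbitrarily. For each pair $S = \{k, k'\}$ with $k < k'$, I call $k$ the \emph{free} index and $k'$ the \emph{bounded} index of $A_S$, and set
\begin{equation*}
    \ff_k = \Big\{F \subseteq \bigcup_{S \ni k} A_S \;:\; |F \cap A_S| \le \mSet_S \text{ whenever } k \text{ is the bounded index of } A_S\Big\}.
\end{equation*}
For distinct $k \ne k'$, sets in $\ff_k$ and $\ff_{k'}$ can meet only inside $A_{\{k,k'\}}$, and the bounded-index constraint on $\max(k,k')$ forces the intersection to have size at most $\mSet_{k,k'}$. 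Counting yields
\begin{equation*}
    \prod_{k=1}^{\ell} |\ff_k| = \prod_{S} \Big(2^{|A_S|} \sum_{t=0}^{\mSet_S} \binom{|A_S|}{t}\Big) = (1 + O(1/n)) \cdot 2^n \prod_S \binom{|A_S|}{\mSet_S},
\end{equation*}
and substituting $|A_S| = \mSet_S n / M$ with $\binom{|A_S|}{\mSet_S} = \frac{|A_S|^{\mSet_S}}{\mSet_S!}(1 + O(1/n))$ recovers the right-hand side of~(\ref{eq: target function asymptotics}). A Lagrange-multiplier analysis of $\sum_S \mSet_S \log |A_S|$ under $\sum_S |A_S| = n$ confirms this choice of part sizes is optimal.

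\textbf{Upper bound strategy.} First, replace each $\ff_k$ by its downward closure $\ff_k^{\downarrow} = \{G : G \subseteq F \text{ for some } F \in \ff_k\}$; this does not decrease $|\ff_k|$ and preserves the $\mSet$-overlap property. Let $\xv_k$ be uniform in $\ff_k^{\downarrow}$ and $p_k(x) = \PP[x \in \xv_k]$. Subadditivity of entropy gives $\log_2 |\ff_k^{\downarrow}| \le \sum_{x \in [n]} h(p_k(x))$ (with $h$ the binary entropy), while the overlap constraint together with independence of $\xv_k$ and $\xv_{k'}$ yields $\sum_x p_k(x) p_{k'}(x) = \EE|\xv_k \cap \xv_{k'}| \le \mSet_{k,k'}$. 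The theorem then reduces to maximizing $\sum_{x,k} h(p_k(x))$ subject to these $\binom{\ell}{2}$ pairwise quadratic constraints; a Lagrangian analysis should show the optimum is attained when, for each $x$, exactly one coordinate of $(p_k(x))_k$ equals $1/2$ and another equals $\Theta(1/n)$ with the rest zero---recovering the part structure $(A_S)$ of the lower bound---while concentration of set sizes on each slice of $\ff_k^{\downarrow}$ controls the $O(1/\sqrt n)$ error.

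\textbf{Main obstacle.} The chief difficulty is rigorously attaining the sharp constant. Aggregating Frankl's pairwise bound via geometric means gives only $\prod_k |\ff_k| \le 2^{n\ell/2} \prod_S \binom{n}{\le \mSet_S}^{1/(\ell-1)}$, whose exponent $n\ell/2$ vastly exceeds the correct $n$ for $\ell \ge 3$. Moreover, the entropy relaxation above replaces the pointwise overlap $|F_k \cap F_{k'}| \le \mSet_{k,k'}$ with only its expectation, which is strictly weaker. Closing this gap requires a genuine correlation inequality for several families---as foreshadowed in the introduction---that captures the joint dependence of all $\binom{\ell}{2}$ constraints on a common ground set. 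I expect the proof to proceed by induction on $\ell$, peeling off one family at a time while refining the partition of $[n]$ induced by the remaining families and tracking typical set-size profiles via Chernoff-type concentration, which is where the $O(1/\sqrt n)$ error naturally arises.
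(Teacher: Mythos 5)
Your lower bound is essentially the paper's construction (Theorem~\ref{theorem: general lower bound}): partition $[n]$ into blocks $A_S$, $S \in \binom{[\ell]}{2}$, of size $\approx \mSet_S n / \sigma$ where $\sigma = \sum_{S'} \mSet_{S'}$, let the smaller index of $S$ be unconstrained on $A_S$ and the larger index be bounded by $\mSet_S$. Your counting $\prod_k |\ff_k| = \prod_S 2^{|A_S|}\binom{|A_S|}{\le \mSet_S}$, the substitution of the optimal block sizes, and the observation that a Lagrangian/cross-entropy argument confirms optimality within this class are all correct and match the paper (which invokes Proposition~\ref{proposition: cross-entropy}).

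The upper bound, however, has a genuine gap, and to your credit you have diagnosed it yourself. The entropy relaxation replaces the almost-sure constraint $|F_k \cap F_{k'}| \le \mSet_{k,k'}$ by its expectation $\sum_x p_k(x) p_{k'}(x) \le \mSet_{k,k'}$, and this loses a polynomial factor. Concretely, already for $\ell = 2$, $m = 1$: taking $p_1 \equiv 1/2$ and $p_2 \equiv 2/n$ satisfies $\sum_x p_1(x) p_2(x) = 1$ while $2^{\sum_x h(p_2(x))} = \Theta(n^2)$, so the relaxation only gives $s^*(n,2,1) = O(n^2 2^n)$, a full factor of $n$ above the truth $(n+1)2^n$. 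For general $\ell, \mSet$, the same block-wise optimum of your relaxed program yields $O(n^{2\sigma}2^n)$, off from the correct $\Theta(n^\sigma 2^n)$ by $n^\sigma$. So the program you set up cannot, on its own, deliver the theorem, and the ``Lagrangian analysis'' you hope will show only one large and one small coordinate per $x$ simply does not have that consequence; the relaxed feasible region is too large.

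The missing ingredient is precisely the ``genuine correlation inequality for several families'' you gesture at: the Rinott--Saks inequality (Theorem~\ref{theorem: rinott-saks inequality} and Corollary~\ref{corollary: rinott sets theorem}). But it is not applied directly: the paper first uses it with a $p$-biased measure to show that the maximal sets $M_1,\dots,M_\ell$ cover $[n]$ up to $O(\log n)$ elements (Lemma~\ref{lemma: log residual}); then obtains a weak bound $\targetFunction = O(n^\sigma 2^n)$ by iterating Daykin's inequality (Proposition~\ref{proposition: weak lower bound}); then combines this with an entropy argument essentially identical in spirit to yours (Proposition~\ref{proposition: big x fraction}) and the weak bound to obtain a degree tradeoff $\prod_{k \in K} d_k(x) = O\big(n^{-\binom{|K|}{2}}\big)$. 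This tradeoff is what makes the key structural step possible: one can prune each $\ff_k$ to $\ff_k'$ with $|\ff_k'| \ge (1 - O(n^{-1/2}))|\ff_k|$ such that every $x$ has nonzero degree in at most two of the $\ff_k'$ (Lemma~\ref{lemma: element excluding}). After pruning, all $\ge 3$-fold meets $\bigwedge_{k\in S}\ff_k'$ are trivial, and the Rinott--Saks product over $k=1,\dots,\ell$ collapses to two nontrivial factors, one bounded by $2^n$ and the other by $\prod_S \binom{|\support(\ff_k'\wedge\ff_{k'}')|}{\le \mSet_S}$ over disjoint supports---exactly the optimization of your lower bound. The pruning step is what restores the pointwise character of the constraint that your expectation relaxation discards, and it is also where the $O(n^{-1/2})$ error originates. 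Your proposed induction on $\ell$ and Chernoff concentration do not appear in the argument and would not obviously close the $n^\sigma$ gap.
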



Unlike in the case $\ell=2$, it seems extremely challenging to determine the exact behaviour of $\targetFunction$ for general $\ell$. In the follow-up paper \cite{KupaNos2}, we improve the precision of the asymptotic from $O(n^{-1/2})$ to $O(n^{-1})$. More importantly, we will show that all extremal examples must be superfamilies of a certain tuple of families that deliver lower bound in Theorem~\ref{theorem: maximal families structure}. However, even coming up with the right extremal construction for general $\ell$ seems to be very difficult.

From the theorem above we immediately derive a cleaner formula for 
the asymptotic of $s^*(n, \ell, m)$.
\begin{corollary}
\label{corollary: asympotic of m-uniform target function}
Suppose $\ell$ and $m$ are fixed integers. Then, as $n\to \infty$, we have the following. 
\begin{align*}
    s^*(n, \ell, m) = \left (1 + O \left ( \frac{1}{\sqrt{n}}\right ) \right )
\left [
     \frac{1}{m!}
     \left (
        \frac{n}{\binom{\ell}{2}}
     \right )^m
    \right ]^{\binom{\ell}{2}}
    2^n.
\end{align*}
\end{corollary}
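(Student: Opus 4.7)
The plan is to derive Corollary~\ref{corollary: asympotic of m-uniform target function} as a direct substitution into the asymptotic formula~\eqref{eq: target function asymptotics} from Theorem~\ref{theorem: maximal families structure}. Specializing to $\mSet_S = m$ for every $S \in \binom{[\ell]}{2}$ leaves the $(1 + O(n^{-1/2})) \cdot 2^n$ prefactor untouched, so the whole task is to simplify the product $\prod_{S} \tfrac{1}{\mSet_S!}\bigl(\tfrac{\mSet_S n}{\sum_{S'} \mSet_{S'}}\bigr)^{\mSet_S}$ under this substitution.

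First, I would compute the normalization $\sum_{S' \in \binom{[\ell]}{2}} \mSet_{S'} = m \cdot \binom{\ell}{2}$, so that each argument of the exponentiation collapses to
\begin{align*}
\frac{\mSet_S \cdot n}{\sum_{S'} \mSet_{S'}} \;=\; \frac{m \cdot n}{m \binom{\ell}{2}} \;=\; \frac{n}{\binom{\ell}{2}}.
\end{align*}
Thus every factor indexed by $S$ becomes identical, equal to $\frac{1}{m!}\bigl(\tfrac{n}{\binom{\ell}{2}}\bigr)^m$. Since there are $\binom{\ell}{2}$ such factors, the product equals $\bigl[\frac{1}{m!}\bigl(\tfrac{n}{\binom{\ell}{2}}\bigr)^m\bigr]^{\binom{\ell}{2}}$, which is exactly the bracketed expression in the corollary.

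Combining with the $(1 + O(n^{-1/2})) \cdot 2^n$ prefactor yields the claimed asymptotic. There is no real obstacle here: the corollary is a purely mechanical specialization of Theorem~\ref{theorem: maximal families structure}, and the only thing to verify is that the $\mSet$-dependent factor simplifies cleanly when all $\mSet_S$ are equal — which it does, because the ratio $\mSet_S / \sum_{S'} \mSet_{S'}$ becomes $1/\binom{\ell}{2}$ and the exponent becomes $m$, independent of $S$.
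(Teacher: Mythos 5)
Your derivation is exactly the intended one: the paper states that the corollary follows immediately from Theorem~\ref{theorem: maximal families structure} by substituting $\mSet_S = m$ for all $S$, and your computation of $\sum_{S'} \mSet_{S'} = m\binom{\ell}{2}$ and the resulting simplification of each factor to $\frac{1}{m!}\bigl(\frac{n}{\binom{\ell}{2}}\bigr)^m$ is the whole argument. Correct, and the same approach as the paper.
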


\medskip

There is an equivalent formulation of Problem~\ref{problem: main} for $\mSet = (m,m,\ldots, m)$.
\begin{problem}
\label{problem: main_graphs}
Let $n, \ell, m$ be integers and $H$ be a complete $(m + 1)$-uniform hypergraph on $n$ vertices. Take some coloring of  edges of $H$ into $\ell$ colours. Let $k_i$, $i=1, \ldots, \ell$ be the number of monochromatic cliques of colour $i$ in $H$. What is the maximum value $\tilde s(n, \ell, m)$ of $k_1\cdot\ldots\cdot k_\ell$ over all possible colorings? (We assume that each of the sets of size $\le m-1$ forms a monochromatic clique in each color.)
\end{problem}
In particular, $\tilde s(n, 2, 1)$ is the maximum of the product of the number of cliques and the number of independent sets in a graph $G$ on $n$ vertices.
In \cite{aprile20182} it was shown that Problem~\ref{problem: main} and Problem~\ref{problem: main_graphs} for are equivalent for $m = 1$. Generally, the following holds.

\begin{proposition}
\label{proposition: formulation equivalence}
Let $n, \ell$ be integers, then $s^*(n, \ell, m) = \tilde s(n, \ell, m)$.
\end{proposition}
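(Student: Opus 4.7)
The plan is to prove both inequalities $s^*(n,\ell,m)\le\tilde s(n,\ell,m)$ and $s^*(n,\ell,m)\ge\tilde s(n,\ell,m)$ by directly translating between families and edge-colorings. The only real idea needed is the following \emph{uniqueness observation}: if $\ff_1,\ldots,\ff_\ell\subset 2^{[n]}$ satisfy the $m$-overlapping property, then for every $(m+1)$-subset $S\subseteq[n]$ there is at most one index $k\in[\ell]$ such that some $F\in\ff_k$ contains $S$ --- otherwise two such supersets, lying in distinct families, would intersect in at least $m+1$ elements, contradicting the hypothesis.

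For the direction $s^*(n,\ell,m)\le\tilde s(n,\ell,m)$, I would take optimal families $\ff_1,\ldots,\ff_\ell$ and use the observation above to $\ell$-color the edges of the complete $(m+1)$-uniform hypergraph $H$ on $[n]$ as follows: assign to each $(m+1)$-edge $S$ the unique $k$ (if one exists) for which some $F\in\ff_k$ contains $S$, and color the remaining edges arbitrarily. Then every $F\in\ff_k$ of size at least $m+1$ has all its $(m+1)$-subsets colored $k$, so it is a monochromatic $k$-clique; every $F\in\ff_k$ of size at most $m$ is counted among the $k$-cliques by the convention in Problem~\ref{problem: main_graphs}. Hence $|\ff_k|$ does not exceed the number of monochromatic $k$-cliques, and taking the product over $k$ yields $s^*(n,\ell,m)\le\tilde s(n,\ell,m)$.

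For the reverse inequality, I would start from an optimal coloring and define $\ff_k$ to be the set of \emph{all} monochromatic cliques of color $k$, so that $|\ff_k|$ equals the number of color-$k$ cliques. It remains to verify the $m$-overlapping property. Suppose for contradiction that $F\in\ff_k$, $F'\in\ff_{k'}$ with $k\ne k'$ and $|F\cap F'|\ge m+1$. Then $|F|,|F'|\ge m+1$, so both $F$ and $F'$ are non-trivial cliques, and any $(m+1)$-subset $S\subseteq F\cap F'$ is an edge of $H$ simultaneously colored $k$ (inside $F$) and $k'$ (inside $F'$), which is impossible. Hence $|F\cap F'|\le m$, and $\tilde s(n,\ell,m)\le s^*(n,\ell,m)$.

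No genuine obstacle appears: the entire argument rests on the single uniqueness observation, and everything else is bookkeeping. The only minor subtlety is the handling of sets of size at most $m$, which the convention accompanying Problem~\ref{problem: main_graphs} absorbs automatically in both directions.
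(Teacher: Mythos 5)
Your proof is correct, and it rests on exactly the same key observation the paper uses: under the $m$-overlapping property, each $(m+1)$-subset of $[n]$ can be contained in sets of at most one family, which is precisely what defines the coloring of $K_n^{m+1}$. The paper packages this through Claim~\ref{claim: hypergraph point of view} (maximal families coincide with independent sets in the hypergraphs $H_k$, which in the uniform case are complements of the color classes), giving an exact bijection for maximal families; you instead show a direct injection from $\ff_k$ into the color-$k$ cliques without invoking maximality, and the exact identification only in the reverse direction. This is a slightly more elementary packaging of the same argument, and arguably a bit cleaner since it avoids passing through the general $\mSet$-overlapping machinery — but the underlying idea is identical.

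One small remark: you correctly note that sets of size exactly $m$ are vacuously monochromatic in every color (they contain no $(m+1)$-edge), so the convention in Problem~\ref{problem: main_graphs} about sets of size $\le m-1$ together with this vacuity handles all small sets in both directions, as you claim.
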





The rest of the paper is organised as follows. In Section~\ref{section: tools} we list some tools that we use and give Peter Frankl's proof that determines $s^*(n,2,m)$. We  prove Proposition~\ref{proposition: formulation equivalence} and discuss related questions in Section~\ref{section: hypergraph point of view}.  In Section~\ref{section: sketch of the proof} we give the sketch of the proof of Theorem~\ref{theorem: maximal families structure}. In Section~\ref{section: lower bound} we prove the lower bound in~\eqref{eq: target function asymptotics}.  In Section~\ref{section: proof of theorem} we prove the upper bound.

In what follows, the standard asymptotic notation such as $f = o(g), f = \Omega(g)$ etc. for some functions $f,g$ is always with respect to $n\to \infty.$


\section{Tools}
\label{section: tools}
There is a trivial bijection between $2^{[n]}$ and the Boolean cube $\{0, 1\}^{n}$. Given a set $X$, we consider its characteristic vector $\cv^X$ whose $i$-th coordinate $\cv^X_i$ equals 0 if $i \not \in X$ and $1$ otherwise. We will take these two equivalent points of view on sets, families, etc. interchangeably.

\subsection{Correlation inequalities}

Given a probability measure on the Boolean cube, we can consider a family of subsets as an event. 
For example, consider a uniform measure on the Boolean cube $\{0, 1\}^n$ and some family of sets $\ff \subset 2^{[n]}$. For a random set $X$ sampled from the uniform measure, the probability of an event $\{X \in \ff\}$ is $\frac{|\ff|}{2^n}$.

This point of view provides us with a range of tools that we use throughout this work. 
In this subsection, we discuss correlation inequalities which are a powerful tool in Combinatorics and Extremal Set Theory. Alon and Spencer in their book~\cite{Alon2016} write that the first appearance of correlation inequalities can probably be attributed to Harris \cite{Harris1960} and Kleitman \cite{Kleitman1966}.

We say that a family of subsets is {\it down-closed (a downset)} if $A\in\ff$ and $B\subset A$ implies $B\in \ff$. Harris--Kleitman correlation inequality is as follows.
\begin{theorem}[Harris--Kleitman correlation inequality]
\label{theorem: Kleitman's inequality}
Let $\aaa, \bb \subset 2^{[n]}$ be down-closed. Then
\begin{align*}
    |\aaa| |\bb| \le 2^n |\aaa \cap \bb|.
\end{align*}
\end{theorem}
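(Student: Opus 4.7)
The plan is to prove this by induction on $n$, using a standard ``slicing'' decomposition along the last coordinate that exploits the down-closed hypothesis.

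For the inductive step, given any down-closed $\ff \subset 2^{[n]}$, split it as
\begin{align*}
    \ff_0 = \{F \in \ff : n \notin F\}, \qquad \ff_1 = \{F \setminus \{n\} : F \in \ff,\, n \in F\},
\end{align*}
both viewed as subfamilies of $2^{[n-1]}$. Two facts are immediate and are the only places the hypothesis enters: $\ff_0$ and $\ff_1$ are both down-closed in $2^{[n-1]}$, and $\ff_1 \subseteq \ff_0$ (since $F \in \ff$ with $n \in F$ implies $F \setminus \{n\} \in \ff$ by down-closedness). Performing this splitting for both $\aaa$ and $\bb$, I would note that
\begin{align*}
    |\aaa| = |\aaa_0| + |\aaa_1|, \quad |\bb| = |\bb_0| + |\bb_1|, \quad |\aaa \cap \bb| = |\aaa_0 \cap \bb_0| + |\aaa_1 \cap \bb_1|.
\end{align*}

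Applying the inductive hypothesis to the pairs $(\aaa_0, \bb_0)$ and $(\aaa_1, \bb_1)$ in $2^{[n-1]}$ gives $|\aaa_i||\bb_i| \le 2^{n-1}|\aaa_i \cap \bb_i|$ for $i \in \{0,1\}$. Summing,
\begin{align*}
    |\aaa_0||\bb_0| + |\aaa_1||\bb_1| \le 2^{n-1}\bigl(|\aaa_0 \cap \bb_0| + |\aaa_1 \cap \bb_1|\bigr) = 2^{n-1}|\aaa \cap \bb|.
\end{align*}
To upgrade this to the full bound it suffices to show that the ``cross terms'' satisfy $|\aaa_0||\bb_1| + |\aaa_1||\bb_0| \le |\aaa_0||\bb_0| + |\aaa_1||\bb_1|$, which rearranges to $(|\aaa_0| - |\aaa_1|)(|\bb_0| - |\bb_1|) \ge 0$. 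This is precisely where $\aaa_1 \subseteq \aaa_0$ and $\bb_1 \subseteq \bb_0$ are used.

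For the base case $n = 0$, both families are either empty or $\{\emptyset\}$, and the inequality is checked by inspection. The only real ``obstacle'' is recognizing the correct slicing and seeing that down-closedness gives both the inclusion $\ff_1 \subseteq \ff_0$ (needed for the cross-term inequality) and the preservation of the down-closed property of $\ff_0,\ff_1$ (needed to apply induction); once these are in place the rest is bookkeeping.
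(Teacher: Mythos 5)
The paper does not prove Theorem~\ref{theorem: Kleitman's inequality}; it is stated as a classical result with references to Harris and Kleitman, so there is no in-paper argument to compare against. Your inductive slicing proof is correct and is indeed the standard textbook argument (essentially Kleitman's original): the split into $\ff_0,\ff_1$ along a fixed element, the observation that down-closedness gives both $\ff_1\subseteq\ff_0$ and preservation of down-closedness, the identity $|\aaa\cap\bb|=|\aaa_0\cap\bb_0|+|\aaa_1\cap\bb_1|$, the inductive bound $|\aaa_i||\bb_i|\le 2^{n-1}|\aaa_i\cap\bb_i|$, and the Chebyshev-type cross-term inequality $(|\aaa_0|-|\aaa_1|)(|\bb_0|-|\bb_1|)\ge 0$ all check out, as does the base case $n=0$. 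This fills in a proof the paper deliberately omits; nothing is missing.
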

If we reformulate the statement in the following way:
\begin{align*}
    \frac{
        |\aaa|
    }{
        2^n
    }
    \cdot
    \frac{
        |\bb|
    }{
        2^n
    }
    \le
    \frac{
        |\aaa \cap \bb|
    }{
        2^n,
    }
\end{align*}
we see that it states that the events $\{X \in \aaa\}$ and $\{X \in \bb\}$ are positively correlated for $X$ that is uniformly distributed over the Boolean cube.

Somewhat surprisingly, this inequality alone can be used to solve our problem in the case of two families. This solution was given by Peter Frankl in a private conversation. We provide it here.
\begin{theorem}[Frankl \cite{Frankl}]
\label{theorem: Frankl's theorem}
Let $\aaa \subset 2^{[n]}$ and $\bb \subset 2^{[n]}$ be such families that for any $A \in \aaa$ and any $B \in \bb$ it holds that $|A \cap B| \le m$. Then
\begin{align*}
    |\aaa||\bb| \le 2^n \sum_{t = 0}^m \binom{n}{t}.
\end{align*}
\end{theorem}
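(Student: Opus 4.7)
The plan is to reduce to the setting where both families are down-closed, then apply Harris--Kleitman. The key observation is that taking downward closures can only enlarge the families (which is fine since we seek an upper bound) and preserves the $m$-overlapping condition, and moreover the intersection of the two downsets is constrained to lie inside $\binom{[n]}{\le m}$.

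More concretely, let $\aaa^\downarrow = \{A' \subseteq [n] : A' \subseteq A \text{ for some } A \in \aaa\}$ and define $\bb^\downarrow$ analogously. Clearly $|\aaa| \le |\aaa^\downarrow|$ and $|\bb| \le |\bb^\downarrow|$, and both downward closures are down-closed by construction. If $A' \in \aaa^\downarrow$ and $B' \in \bb^\downarrow$, pick witnesses $A \in \aaa$ and $B \in \bb$ with $A' \subseteq A$, $B' \subseteq B$; then $A' \cap B' \subseteq A \cap B$, so $|A' \cap B'| \le |A \cap B| \le m$. Hence $\aaa^\downarrow$ and $\bb^\downarrow$ still satisfy the hypothesis.

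Now apply the Harris--Kleitman inequality (Theorem~\ref{theorem: Kleitman's inequality}) to the two downsets to get
\begin{align*}
    |\aaa^\downarrow| \cdot |\bb^\downarrow| \le 2^n \cdot |\aaa^\downarrow \cap \bb^\downarrow|.
\end{align*}
The final ingredient is a size constraint: any $X \in \aaa^\downarrow \cap \bb^\downarrow$ simultaneously plays the role of an $A'$ and a $B'$, so $|X| = |X \cap X| \le m$. Therefore $\aaa^\downarrow \cap \bb^\downarrow \subseteq \binom{[n]}{\le m}$, which bounds $|\aaa^\downarrow \cap \bb^\downarrow|$ by $\sum_{t=0}^m \binom{n}{t}$. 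Chaining the inequalities yields
\begin{align*}
    |\aaa| \cdot |\bb| \le |\aaa^\downarrow| \cdot |\bb^\downarrow| \le 2^n \sum_{t=0}^m \binom{n}{t},
\end{align*}
as desired.

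There is essentially no obstacle here once one spots the right reduction: the only thing to verify is that downward closure is the right operation, which works precisely because the cross-intersection condition is inherited by subsets and because the diagonal sets $X \in \aaa^\downarrow \cap \bb^\downarrow$ are self-intersecting. I would expect no deeper difficulty than noticing this double role played by $X$.
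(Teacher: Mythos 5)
Your proof is correct and takes essentially the same route as the paper: pass to downward closures, verify the cross-intersection hypothesis is inherited, apply Harris--Kleitman, and bound $|\aaa^\downarrow \cap \bb^\downarrow|$ by noting each member has size at most $m$. The only (cosmetic) difference is that you replace the families by their closures directly, whereas the paper first takes an extremal pair and argues by maximality that it is already down-closed.
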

It is not difficult to see that $\aaa = 2^{[n]}$ and $\bb = {[n]\choose \le m}$ satisfy the conditions of the theorem and attain equality in the inequality above.


\begin{proof}
Consider families $\aaa$ and $\bb$ that maximize $|\aaa||\bb|$. They are down-closed, otherwise consider their down-closures $\aaa^{\downarrow}, \bb^{\downarrow}$, where
\begin{align}
    \ff^{\downarrow} & = \{F \subset [n] \mid \exists F' \in \ff \; F \subset F'\}.
\end{align}
Clearly, $\aaa^{\downarrow}, \bb^{\downarrow}$ satisfy the $m$-overlapping property as well, which by maximality implies $\aaa = \aaa^{\downarrow}, \bb = \bb^{\downarrow}$. 

Since $\aaa$ and $\bb$ is down-closed, we can apply Theorem~\ref{theorem: Kleitman's inequality}. Then
\begin{align*}
    |\aaa||\bb| \le 2^n |\aaa \cap \bb|.
\end{align*}
But $\aaa \cap \bb$ can consist only of sets of cardinality at most $m$. Thus, $\aaa \cap \bb \subset \binom{[n]}{\le m}$ and
\[
|\aaa||\bb| \le 2^n \sum_{t = 0}^m \binom{n}{t}.\qedhere
\popQED
\]
\end{proof}

Note that if $m = 1$ then Theorem~\ref{theorem: Frankl's theorem} implies the bound of Aprile et al~\cite{aprile20182} and is a special case of Kupavskii and Weltge's result~\cite{Kupavskii2020}.

Meanwhile, Theorem~\ref{theorem: Kleitman's inequality} is not sufficient to resolve Problem~\ref{problem: main} already for $\ell = 3$. There are several correlation inequalities that generalize Harris--Kleitman correlation inequality. One is  Daykin's inequality~\cite{Daykin1977}. Before we present it, we introduce some extra notation. Given vectors $\xv$ and $\yv$ from $\R^n$  we define vectors $\xv \land \yv$ and $\xv \lor \yv$ such that $(\xv \lor \yv)_j = \max(\xv_j, \yv_j)$ and $(\xv \land \yv)_j = \min(\xv_j,\yv_j)$.

It is easy to see that $\vee$ and $\wedge$ restricted on the Boolean cube relate to union and intersection of sets respectively.
For two families $\ff_1$ and $\ff_2$ we denote by $\ff_1\wedge \ff_2$, $\ff_1\vee \ff_2$ the family of pairwise intersections and pairwise unions of sets from $\ff_1$ and $\ff_2$, respectively: $$\ff_1\wedge \ff_2 = \{F_1\cap F_2: F_1\in \ff_1, \ F_2\in \ff_2\},$$
$$\ff_1\vee \ff_2 = \{F_1\cup F_2: F_1\in \ff_1, \ F_2\in \ff_2\}.$$
Note that for down-closed families $\aaa$ and $\bb$ we have $\aaa \wedge \bb = \aaa \cap \bb$. 

Then Daykin's correlation inequality states the following.
\begin{theorem}[Daykin correlation inequality]
\label{theorem: Daykin's inequality}
Let $\aaa$ and $\bb$ be two families of sets. Then
\begin{align*}
    |\aaa| |\bb| \le |\aaa \lor \bb||\aaa \land \bb|.
\end{align*}
\end{theorem}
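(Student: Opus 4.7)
The plan is to prove Daykin's inequality by induction on $n$, the size of the ground set. A clean high-level alternative is to derive it as a corollary of the Ahlswede--Daykin four functions theorem (FFT) by taking the four functions to be the indicators of $\aaa$, $\bb$, $\aaa\vee\bb$, $\aaa\wedge\bb$; the hypothesis of FFT is then immediate from the very definition of $\vee$ and $\wedge$ on families. Since proving FFT itself boils down to essentially the same induction, I will outline the direct inductive argument.

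For the base case $n=1$ there are only finitely many pairs $(\aaa,\bb)$ with $\aaa,\bb\subset 2^{\{1\}}$, and the inequality is verified by enumeration. For the inductive step, split each family according to whether it contains the element $n$. Define $\aaa_0=\{A\in\aaa: n\notin A\}$ and $\aaa_1=\{A\setminus\{n\}: A\in\aaa,\ n\in A\}$, viewed as subfamilies of $2^{[n-1]}$, and analogously $\bb_0,\bb_1$. Write $a_i=|\aaa_i|$, $b_i=|\bb_i|$, $u_i=|(\aaa\vee\bb)_i|$, $w_i=|(\aaa\wedge\bb)_i|$, so that the target inequality becomes
\begin{align*}
    (a_0+a_1)(b_0+b_1)\le (u_0+u_1)(w_0+w_1).
\end{align*}

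Next, by tracking how the element $n$ behaves under unions and intersections, one checks the set-theoretic containments $\aaa_i\vee\bb_j\subset (\aaa\vee\bb)_{\max(i,j)}$ and $\aaa_i\wedge\bb_j\subset (\aaa\wedge\bb)_{\min(i,j)}$ for all $i,j\in\{0,1\}$. Applying the inductive hypothesis in $2^{[n-1]}$ to each of the four pairs $(\aaa_i,\bb_j)$ then yields $a_i b_j\le u_{\max(i,j)}\,w_{\min(i,j)}$ for all $i,j\in\{0,1\}$.

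The main obstacle is that these four inequalities do not directly sum to the desired bound: the products $a_0b_1$ and $a_1b_0$ both appeal to $u_1 w_0$, and the asymmetry has to be absorbed. The key step is therefore an elementary lemma on eight nonnegative reals, namely that the system $a_ib_j\le u_{\max(i,j)}w_{\min(i,j)}$ for $i,j\in\{0,1\}$ implies $(a_0+a_1)(b_0+b_1)\le(u_0+u_1)(w_0+w_1)$. This lemma is essentially the $n=1$ case of FFT in disguise, and once it is in hand the induction closes at once. I expect this lemma -- really, a short real-variable manipulation that reorganises the four inductive bounds -- to be the one genuinely delicate point; the rest of the proof is bookkeeping.
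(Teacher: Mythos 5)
The paper does not prove Daykin's inequality: it states Theorem~\ref{theorem: Daykin's inequality} with a citation to Daykin (1977) and uses it as a black box, so there is no ``paper proof'' to compare against. That said, your outline is the standard proof (it is essentially the induction that proves the Ahlswede--Daykin four functions theorem, specialised to indicators), and it is correct as far as it goes. The set-theoretic containments $\aaa_i\vee\bb_j\subset(\aaa\vee\bb)_{\max(i,j)}$ and $\aaa_i\wedge\bb_j\subset(\aaa\wedge\bb)_{\min(i,j)}$ are right, the inductive application to each pair $(\aaa_i,\bb_j)$ is right, and you are also right that the four resulting inequalities cannot simply be summed: doing so gives $u_0w_0+2u_1w_0+u_1w_1$, which exceeds $(u_0+u_1)(w_0+w_1)$ whenever $u_1w_0>u_0w_1$. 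So the eight-reals lemma you isolate is genuinely needed and is the entire content of the base case. For completeness, here is its proof. By the hypotheses $a_0b_0\le u_0w_0$ and $a_1b_1\le u_1w_1$, it suffices to show $P+Q\le R+S$ where $P=a_0b_1$, $Q=a_1b_0$, $R=u_0w_1$, $S=u_1w_0$. Note that $PQ=(a_0b_0)(a_1b_1)\le(u_0w_0)(u_1w_1)=RS$ and $\max(P,Q)\le S$. If $S=0$ then $P=Q=0$ and we are done; otherwise, since $Q\le S$,
\begin{align*}
P+Q \;=\; P\Big(1-\tfrac{Q}{S}\Big)+Q+\tfrac{PQ}{S} \;\le\; S\Big(1-\tfrac{Q}{S}\Big)+Q+\tfrac{PQ}{S} \;=\; S+\tfrac{PQ}{S} \;\le\; S+R.
\end{align*}
With this lemma your induction closes, so the proposal is a correct and complete plan; you should just make the lemma's proof explicit rather than leaving it as ``a short real-variable manipulation,'' since the inequality is false without using the multiplicative relation $PQ\le RS$ together with $\max(P,Q)\le S$.
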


Actually, Daykin's inequality works in a more general setting which we omit. Fortuin, Kasteleyn and Ginibre proposed  another generalization of the Harris--Kleitman inequality in~\cite{Fortuin1971} to a wide class of log-supermodular measures. We will refer to such measures  as the {\em FKG-measures}. 
\begin{definition}[FKG-measure]
    A $\sigma$-finite (nonegative) measure $\mu$ on $\R^n$ is said to be an FKG measure if $\mu$ has a density function $\phi$ with respect to some product measure $d \sigma$ on $\R^k$, (that is, $d \sigma(\xv) = \prod_{j = 1}^k d \sigma(\xv_j)$, and $d \mu(\xv) = \phi(\xv) d \sigma(\xv)$), where $\phi$ satisfies for all $\xv$ and $\yv$ in $\R^k$
    \begin{align}
        \phi(\xv) \phi(\yv) \le \phi (\xv \land \yv) \phi (\xv \lor \yv).
    \end{align}
\end{definition}
This definition is slightly different from the definition used by Fortuin et al in~\cite{Fortuin1971}. It is taken from~\cite{Rinott1991}, where the authors prove a correlation inequality for several families that we will also need in this work.

\begin{theorem}[Rinott---Saks correlation inequality~\cite{Rinott1991}]
\label{theorem: rinott-saks inequality}
    Let $\ell,n$ be positive integers. Let $f_1, f_2, \ldots, f_\ell$ and $g_1, \dots, g_\ell$ be nonnegative real-valued functions defined on $\R^n$ that satisfy following condition: for every sequence $\xv^1, \ldots, \xv^\ell$ of elements from $\R^n$ we have
    \begin{align}
        \prod_{i = 1}^\ell
            f_i(\xv^i)
        \le
        \prod_{i = 1}^\ell
            g_i \left (
                \bigvee_{S \in \binom{[n]}{i}}
                \bigwedge_{j \in S} \xv^j
            \right ).
    \end{align}
    Then, for any FKG-measure $\mu$ on $\R^n$ we have
    \begin{align}
        \prod_{i = 1}^m
            \int_{\R^n}
                f_i(\xv) d \mu(\xv)
        \le
        \prod_{i = 1}^m
            \int_{\R^n}
                g_i(\xv) d \mu(\xv).
    \end{align}
\end{theorem}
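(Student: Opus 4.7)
The plan is to reduce Theorem~\ref{theorem: rinott-saks inequality} to its purely combinatorial version on a finite distributive lattice (approximating the $f_i, g_i$ and $\phi$ by simple functions on a dyadic grid and passing to the limit using monotone convergence), and then to prove that discretized statement by induction on $\ell$.

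\emph{Base case $\ell = 2$.} For $\ell = 2$ the hypothesis reduces to
\begin{align*}
    f_1(\xv^1) f_2(\xv^2) \le g_1(\xv^1 \vee \xv^2)\, g_2(\xv^1 \wedge \xv^2)
\end{align*}
for every $\xv^1, \xv^2 \in \R^n$, and the desired conclusion is the four functions theorem of Ahlswede and Daykin relative to the FKG measure $\mu$. That case is itself proved by induction on $n$: the log-supermodularity of $\phi$ is trivially satisfied in dimension one (where $\vee, \wedge$ are just $\max, \min$), and a short rearrangement-type argument on totally ordered $\R$ handles the one-dimensional base case; a slice-by-slice application then lifts to $\R^n$.

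\emph{Inductive step on $\ell$.} Assuming the theorem is known for $\ell - 1$ functions, I would merge two of the factors (say $f_{\ell-1}$ and $f_\ell$) by lifting to the product lattice $L' = L \times L$, setting
\begin{align*}
    \tilde f(\xv, \yv) = f_{\ell-1}(\xv)\, f_\ell(\yv),
\end{align*}
and analogously pairing two of the $g_i$'s. The product of two FKG measures is FKG on $L'$, and integrals of merged functions factor as products of the integrals of the originals, so the $(\ell-1)$-function version applied on $L'$ yields the required bound, \emph{provided} one can verify that the $\ell$-function hypothesis on $L$ translates into the corresponding $(\ell-1)$-function hypothesis on $L'$ for the merged system.

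\emph{Main obstacle.} This merging step is the technical heart of the proof. The $\ell$-function hypothesis simultaneously constrains all $\ell$ elementary lattice-symmetric functions $\bigvee_{|S|=i}\bigwedge_{j\in S}\xv^j$ for $i = 1, \ldots, \ell$, while after pairing we are left with only $\ell - 1$ such operators on the enlarged lattice. Choosing which pair of $f$'s and which pair of $g$'s to merge so that the intermediate `generalized medians' telescope correctly across the induction -- and keeping every merged function nonnegative throughout -- is the delicate combinatorial content; everything else is approximation and a two-layer induction.
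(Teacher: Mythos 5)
The paper does not supply a proof of this theorem at all: it is quoted directly from Rinott and Saks~\cite{Rinott1991} (and the neighbouring Corollary~\ref{corollary: rinott sets theorem} is labelled as ``Theorem~4.1'' of that reference). So there is no ``paper proof'' to compare against, and what you have written has to be judged on its own.

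As a standalone argument your proposal has a genuine gap, and it is exactly where you flag it. The inductive step on $\ell$ by pairing $f_{\ell-1},f_\ell$ into $\tilde f$ on $L\times L$ does not go through. After merging, your family has $\ell-1$ arguments, but $\ell-2$ of them still live in $L$ while one lives in $L\times L$, so before the $(\ell-1)$-function hypothesis can even be stated you must lift the remaining $f_1,\dots,f_{\ell-2}$ to $L\times L$, typically by a diagonal embedding $\xv\mapsto(\xv,\xv)$. But then the generalized medians of $(\xv^1,\xv^1),\dots,(\xv^{\ell-2},\xv^{\ell-2}),(\xv^{\ell-1},\xv^\ell)$ in $L\times L$, computed coordinatewise, do not project to the $\ell$ generalized medians $\bigvee_{|S|=i}\bigwedge_{j\in S}\xv^j$ of the original $\ell$ points in $L$. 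Already for $\ell=3$, the middle median $(\xv^1\wedge\xv^2)\vee(\xv^1\wedge\xv^3)\vee(\xv^2\wedge\xv^3)$ involves the cross-term $\xv^2\wedge\xv^3$, which never arises from any lattice operation applied to $(\xv^1,\xv^1)$ and $(\xv^2,\xv^3)$. In other words the symmetric-function structure in the hypothesis is broken by the pairing, and there is no choice of which $g_i$'s to merge that restores it; this is not a bookkeeping issue to be ``telescoped'' but a structural obstruction. Your base case and the approximation/tensorization scaffolding are fine and standard, but the core of the theorem is precisely the step you leave unverified. For reference, Rinott and Saks do not induct on $\ell$ at all; after reducing by product structure to dimension $n=1$, they handle the totally ordered one-dimensional case for all $\ell$ simultaneously by a direct exchange/rearrangement argument, and then lift to $\R^n$ by a dimension induction, so that the number of functions $\ell$ stays fixed throughout.
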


With appropriate measures and functions, this inequality implies all of the previous ones we listed. We will need the following $\{0, 1\}$-corollary of Theorem~\ref{theorem: rinott-saks inequality}:
\begin{corollary}[Theorem 4.1 from~\cite{Rinott1991}]
\label{corollary: rinott sets theorem}
For any families of sets $\aaa_1, \ldots, \aaa_\ell$,
\begin{align}
    \prod_{k = 1}^\ell |\aaa_k| \le \prod_{k = 1}^\ell \left | \bigvee_{S \in \binom{[\ell]}{k}} \left (\bigwedge_{s \in S} \aaa_s \right ) \right |.
\end{align}
\end{corollary}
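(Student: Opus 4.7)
The plan is to apply Theorem~\ref{theorem: rinott-saks inequality} directly, with an appropriate choice of FKG measure and of the functions $f_i, g_i$. Identify sets with their characteristic vectors in $\{0,1\}^n$, and take $\mu$ to be the counting measure on $\{0,1\}^n$, viewed as a measure on $\R^n$ supported on the Boolean cube. Since $\mu$ factorises coordinatewise into a product of $1$-dimensional counting measures on $\{0,1\}$, it is a product measure and in particular an FKG measure: one may take $d\sigma = d\mu$ with density $\phi \equiv 1$, in which case the log-supermodularity condition is automatic.

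For $k\in[\ell]$, set $f_k = \indicator_{\aaa_k}$ and $g_k = \indicator_{\bb_k}$, where $\bb_k := \bigvee_{S\in\binom{[\ell]}{k}} \bigwedge_{s\in S}\aaa_s$ is precisely the family appearing on the right of the claimed inequality. Then $\int f_k\,d\mu = |\aaa_k|$ and $\int g_k\,d\mu = |\bb_k|$, so the conclusion of Theorem~\ref{theorem: rinott-saks inequality} becomes exactly the desired bound, provided the pointwise hypothesis is met.

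To verify that hypothesis, consider any sequence $\xv^1,\dots,\xv^\ell\in\{0,1\}^n$. The product $\prod_k f_k(\xv^k)$ equals $0$ or $1$. If it is $0$, the inequality is trivial. If it equals $1$, then $\xv^k \in \aaa_k$ for every $k$, so by the very definition of $\bb_k$ the vector $\bigvee_{S\in\binom{[\ell]}{k}}\bigwedge_{j\in S}\xv^j$ belongs to $\bb_k$, and $g_k$ evaluates to $1$ on that vector. Hence the right-hand product equals $1$ as well, and the required pointwise inequality holds.

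The only potential concern is whether a discrete counting measure fits the (superficially continuous) framework of Theorem~\ref{theorem: rinott-saks inequality}; the product structure resolves this cleanly, so there is no substantive obstacle. The entire argument is a one-line specialisation of the Rinott--Saks inequality to indicator functions of families, and the choice $g_k = \indicator_{\bb_k}$ is forced by what we want to appear on the right-hand side.
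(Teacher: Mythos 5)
Your proof is correct and takes essentially the same approach as the paper: specialize Theorem~\ref{theorem: rinott-saks inequality} to the counting measure on $\{0,1\}^n$, with $f_k$ the indicator of $\aaa_k$ and $g_k$ the indicator of $\bigvee_{S\in\binom{[\ell]}{k}}\bigwedge_{s\in S}\aaa_s$. The paper states this reduction in one sentence; you additionally spell out the pointwise hypothesis check, which is correct.
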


Corollary~\ref{corollary: rinott sets theorem} can be derived by taking integral over the counting (i.e., uniform) measure on $\{0,1\}^n$. The corresponding $f_1,\ldots, f_\ell$ and $g_1,\ldots, g_\ell$ are simply the indicator functions that for a given set indicate if it belongs to the corresponding family: $f_i$ for $\aaa_i$ and $g_j$ for $\bigvee_{S \in \binom{[\ell]}{j}} \left (\bigwedge_{s \in S} \aaa_s \right )$.

According to Theorem~\ref{theorem: rinott-saks inequality}, we can replace the uniform with any FKG-measure. An important example of an FKG-measure is the $p$-biased measure with $0\le p\le 1$ for the sets $X\subset [n]$ and families $\ff\subset 2^{[n]}$, defined as follows.
\begin{align*}
    \mu_p(X) =& p^{|X|}(1 - p)^{n - |X|} = \left (\frac{p }{1 - p} \right )^{|X|} (1 - p)^n,\\
    \mu_p(\ff) =& \sum_{X \in \ff} \mu_p(X).
\end{align*}
We say that two measures $\mu,\mu'$ are {\it proportional} if there is a non-zero constant $C$ such that $\mu(x) = C\mu'(x)$ for any $x$. We get the following corollary which slightly more general than Corollary~\ref{corollary: rinott sets theorem}.
\begin{corollary}
\label{corollary: rinott-saks p-biased}
For any collection of families $\aaa_1, \ldots, \aaa_\ell\subset 2^{[n]}$ and $0\le p\le 1$ we have
\begin{align}
    \prod_{k = 1}^\ell \mu_p(\aaa_k) \le \prod_{k = 1}^\ell \mu_p \left (\bigvee_{S \in \binom{[\ell]}{k}} \left (\bigwedge_{s \in S} \aaa_s \right ) \right ).
\end{align}
Moreover, the same holds for any measure that is proportional to $\mu_p.$
\end{corollary}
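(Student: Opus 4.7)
The plan is to apply Theorem~\ref{theorem: rinott-saks inequality} directly to the $p$-biased measure with $f_k = \indicator_{\aaa_k}$ and $g_k = \indicator_{\bb_k}$, where $\bb_k := \bigvee_{S \in \binom{[\ell]}{k}} \bigwedge_{s \in S} \aaa_s$. The conclusion of that theorem then reads precisely as $\prod_k \mu_p(\aaa_k) \le \prod_k \mu_p(\bb_k)$, so the whole task reduces to two routine verifications: $(i)$ that $\mu_p$ is an FKG-measure and $(ii)$ that the chosen $f_k$'s and $g_k$'s satisfy the pointwise hypothesis.

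For $(i)$, I would take $d\sigma$ to be the product of counting measures on $\{0,1\}$ per coordinate, and write $d\mu_p = \phi\, d\sigma$ with $\phi(\xv) = p^{|\xv|}(1-p)^{n-|\xv|}$ on $\{0,1\}^n$. The identity $|\xv| + |\yv| = |\xv \wedge \yv| + |\xv \vee \yv|$, valid for all $\xv,\yv \in \{0,1\}^n$, already gives log-\emph{modularity} $\phi(\xv)\phi(\yv) = \phi(\xv \wedge \yv)\phi(\xv \vee \yv)$, which is strictly stronger than the required log-supermodularity; outside $\{0,1\}^n$ both products vanish, so log-supermodularity holds trivially on all of $\R^n$.

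For $(ii)$, if some $f_i(\xv^i) = 0$ the inequality is immediate, so assume $\xv^i \in \aaa_i$ for every $i$ (identifying vectors with sets). Then for each $k$ and each $S \in \binom{[\ell]}{k}$ the meet $\bigwedge_{j \in S}\xv^j$ lies in $\bigwedge_{j\in S}\aaa_j$, so $\bigvee_{S}\bigwedge_{j \in S}\xv^j \in \bb_k$ and $g_k$ evaluates to $1$ there. Applying Theorem~\ref{theorem: rinott-saks inequality} yields the desired inequality for $\mu_p$; for any measure $\mu = C \mu_p$ proportional to $\mu_p$ both sides of the inequality acquire the common factor $C^\ell$, so it carries over verbatim.

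Since this is essentially a turn-the-crank application of Theorem~\ref{theorem: rinott-saks inequality}, there is no substantive obstacle. The only point that deserves any emphasis is recognising that the $p$-biased density is log-modular (rather than merely log-supermodular), which is exactly the reason $\mu_p$ qualifies as an FKG-measure in the sense of the definition above and hence why Rinott--Saks applies without any further tinkering.
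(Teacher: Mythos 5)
Your proposal is correct and matches the route the paper intends: apply Theorem~\ref{theorem: rinott-saks inequality} with $f_k=\indicator_{\aaa_k}$, $g_k=\indicator_{\bb_k}$, check the pointwise hypothesis exactly as you do, and observe that $\mu_p$ is FKG because its density on $\{0,1\}^n$ is log-modular (via $|\xv|+|\yv|=|\xv\wedge\yv|+|\xv\vee\yv|$). The remark about proportional measures contributing a common factor $C^\ell$ to both sides is the right way to get the final sentence of the corollary.
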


\subsection{Entropy}

Another probabilistic tool that we use in our work is entropy. One can find a detailed survey in~\cite{Alon2016}.  Consider a random variable $X$ with finite support. Then the entropy $\entropy [X]$ is defined as
\begin{align}
    \entropy [X] = - \sum_{x \in \support X} \PP [X = x] \log_2 \PP [X = x].
\end{align}
It is a non-negative function that satisfies the following properties:
\begin{claim}
\label{claim: entropy properties}
We have
    \begin{itemize}
        \item[(i)] If $X$ and $Y$ are arbitrary random variables distributed over a finite set, then   $\entropy [X, Y] \le \entropy [X] + \entropy [Y]$.
        \item[(ii)] Consider a random variable $X$ distributed over the sets of a family $\ff$. Then $\entropy [X] \le \log_2 |\ff|$. The equality holds if and only if $X$ is uniformly distributed over $\ff$.
    \end{itemize}
\end{claim}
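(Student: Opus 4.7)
The plan is to treat both parts by appealing to Jensen's inequality applied to the concavity of $\log_2$ (equivalently, to Gibbs' inequality for relative entropy), since both statements are classical in information theory.

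For part (i), I would start by writing the joint distribution $p(x,y) = \PP[X = x, Y = y]$ with marginals $p_X(x)$ and $p_Y(y)$, and computing the difference
\begin{align*}
    \entropy[X] + \entropy[Y] - \entropy[X, Y]
    = \sum_{x,y} p(x,y) \log_2 \frac{p(x,y)}{p_X(x) p_Y(y)}.
\end{align*}
Then I would invoke Jensen's inequality for the convex function $t\mapsto t\log_2 t$ (or equivalently apply concavity of $\log_2$ to the reciprocal ratio) to bound this sum from below by $\log_2 \sum_{x,y} p_X(x) p_Y(y) \cdot \frac{p(x,y)}{p_X(x) p_Y(y)} = \log_2 1 = 0$, which yields the subadditivity. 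The only minor nuisance is restricting the sum to pairs $(x,y)$ with $p(x,y) > 0$ and checking that the marginals on these pairs are strictly positive; this is routine.

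For part (ii), I would rewrite $\entropy[X] = \sum_{x \in \support X} p(x) \log_2 \frac{1}{p(x)}$ and apply Jensen's inequality to the concave function $\log_2$, giving
\begin{align*}
    \entropy[X] \le \log_2 \sum_{x \in \support X} p(x) \cdot \frac{1}{p(x)} = \log_2 |\support X| \le \log_2 |\ff|.
\end{align*}
The equality characterization then requires two things. First, equality in Jensen forces $\frac{1}{p(x)}$ to be constant on $\support X$, i.e.\ $X$ is uniform on its support. Second, equality $|\support X| = |\ff|$ forces the support to be all of $\ff$. Together these yield uniformity on $\ff$, and conversely a direct calculation shows that the uniform distribution on $\ff$ achieves $\entropy[X] = \log_2 |\ff|$.

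There is no genuine obstacle here: both facts are standard and the proofs amount to a single application of Jensen. The only care needed is to handle the zero-probability atoms (so that $0 \log_2 0 = 0$ by convention does not cause issues) and to be precise with the equality case in (ii), separating the Jensen equality condition from the requirement that $X$ uses every element of $\ff$.
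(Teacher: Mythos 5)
The paper does not prove this claim at all: it simply states ``The proof of both statements can be found in~\cite{Alon2016}'' and moves on. So there is no in-text argument to compare against, only the reference to Alon--Spencer.

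Your argument is the standard textbook proof (non-negativity of relative entropy, established via Jensen/Gibbs) and it is correct in substance. Part (ii) and the equality analysis are fine. In part (i), note that the displayed lower bound $\log_2 \sum_{x,y} p_X(x) p_Y(y) \cdot \frac{p(x,y)}{p_X(x) p_Y(y)}$ is not literally the Jensen bound for the convex $f(t) = t\log_2 t$ with weights $p_X(x)p_Y(y)$ and arguments $t = p(x,y)/(p_X(x)p_Y(y))$; that bound is $f\bigl(\sum_{x,y} p_X p_Y \cdot \frac{p}{p_X p_Y}\bigr) = 1\cdot \log_2 1$, so the two happen to agree numerically but the display is written as if you were applying concavity of $\log_2$, which would give an upper bound on a different quantity. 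A cleaner one-liner is: by concavity of $\log_2$ with weights $p(x,y)$, $\sum_{x,y} p(x,y)\log_2\frac{p_X(x)p_Y(y)}{p(x,y)} \le \log_2\sum_{x,y} p_X(x)p_Y(y) \le 0$, and then negate. Also, rather than restricting to $p(x,y)>0$ (which makes the weights sum to less than $1$ and breaks the normalization Jensen needs), it is simpler to keep all terms and use the convention $0\log_2 0 = 0$.
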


The proof of both statements can be found in~\cite{Alon2016}.

Another similar function that characterizes the difference between two distributions is the cross-entropy. For the particular application that it has in our work, we define it as follows:
\begin{align*}
    \entropy \left ((p_i)_{i \in S}, (q_i)_{i \in S} \right ) = - \sum_{i \in S} p_i \log_2 q_i,
\end{align*}
where $(p_i)_{i \in S}$, $(q_i)_{i \in S}$ are two arbitrary discrete distributions with the same support $S$.

\begin{proposition}
\label{proposition: cross-entropy}
Given a distribution $(p_i)_{i \in S}$, the cross-entropy as a function of $(q_i)_{i \in S}$ achieves its minimum when distributions $(p_i)_{i \in S}$ and $(q_i)_{i \in S}$ coincide.
\end{proposition}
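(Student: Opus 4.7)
The proposition is the classical Gibbs inequality in disguise, and the plan is to reduce it to a single convexity inequality for the logarithm. The key step is to rewrite
\begin{align*}
\entropy\bigl((p_i)_{i\in S}, (q_i)_{i\in S}\bigr)
- \entropy\bigl((p_i)_{i\in S}, (p_i)_{i\in S}\bigr)
= \sum_{i \in S} p_i \log_2 \frac{p_i}{q_i},
\end{align*}
so the proposition is equivalent to showing that this quantity (the Kullback--Leibler divergence) is non-negative, with equality iff $p_i = q_i$ for every $i\in S$.

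To establish non-negativity, I would invoke the elementary inequality $\ln x \le x - 1$ for $x>0$, with equality iff $x = 1$. Applied to $x = q_i/p_i$ (restricting to indices with $p_i > 0$, which we may do since terms with $p_i = 0$ contribute nothing), it gives
\begin{align*}
\sum_{i \in S} p_i \log_2 \frac{q_i}{p_i}
\le \frac{1}{\ln 2}\sum_{i \in S} p_i \Bigl(\frac{q_i}{p_i} - 1\Bigr)
= \frac{1}{\ln 2}\Bigl(\sum_{i \in S} q_i - \sum_{i \in S} p_i\Bigr) = 0,
\end{align*}
using that both $(p_i)$ and $(q_i)$ are probability distributions and hence sum to $1$. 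Negating both sides yields $\sum_i p_i \log_2(p_i/q_i) \ge 0$, which is exactly what we need.

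For the equality case, observe that equality in $\ln x \le x-1$ forces $q_i/p_i = 1$ for every $i$ with $p_i > 0$; combined with $\sum q_i = \sum p_i = 1$, this forces $q_i = p_i$ on all of $S$. I would also mention the alternative derivation via Lagrange multipliers: minimizing $-\sum p_i \log_2 q_i$ under the constraint $\sum q_i = 1$ yields the stationarity condition $p_i/q_i = \lambda \ln 2$ for a common $\lambda$, so $q_i \propto p_i$, and normalization gives $q_i = p_i$; convexity of $q\mapsto -\log_2 q$ guarantees that this critical point is indeed a global minimum. Since this is a standard textbook fact, there is no real obstacle here: the only thing to be careful about is handling indices with $p_i = 0$ (where the cross-entropy term vanishes by the convention $0 \log_2 0 = 0$), which is a routine matter of restricting the sum to the support of $(p_i)$.
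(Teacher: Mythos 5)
Your proof is correct and is the standard argument (Gibbs' inequality via $\ln x \le x - 1$); the paper itself gives no proof and simply defers to a reference, so there is nothing to compare against. The reduction to non-negativity of the Kullback--Leibler divergence, the handling of indices with $p_i = 0$, and the equality case are all handled properly.
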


For the proof see, for example,~\cite{Tao2020}.

\subsection{Coverings and matchings}

Our problem has a natural hypergraph interpretation, and so we will need some simple tools from hypergraph theory. 
We call a subset $T$ of $V$ a \textit{covering} for a hypergraph $(V, E)$, if for any edge $e \in E$ we have $e \cap T \neq \varnothing$. A subset $\mathcal{M}$ of $E$ is a \textit{matching} if edges of $\mathcal{M}$ are pairwise disjoint. A matching $\mathcal{M}$ is \textit{maximal} if it is impossible to enlarge it by adding another $e \in E$. A covering $T$ is called \textit{minimum} if there is no covering of smaller cardinality.

The following statement is folklore.
\begin{proposition}
\label{proposition: covering and matching relation}
Let $(V, E)$ be an arbitrary hypergraph with edges of size at most $t$. Then the size of a minimum covering $T$ is at most size of any maximal matching $\mathcal{M}$ times $t$.
\end{proposition}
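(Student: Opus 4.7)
The plan is to construct an explicit covering from a maximal matching and observe that its size is at most $t|\mathcal{M}|$. Concretely, given a maximal matching $\mathcal{M}\subset E$, I take
\[
    T_0 := \bigcup_{e\in \mathcal{M}} e.
\]
Since every $e\in\mathcal{M}$ has $|e|\le t$ and there are $|\mathcal{M}|$ edges in the matching, the simple union bound gives $|T_0|\le t|\mathcal{M}|$.

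Next I would verify that $T_0$ is in fact a covering. Suppose for contradiction that some edge $f\in E$ satisfies $f\cap T_0=\varnothing$. Then $f$ is disjoint from every edge in $\mathcal{M}$, so $\mathcal{M}\cup\{f\}$ is still a matching, contradicting the maximality of $\mathcal{M}$. Hence every edge of $E$ meets $T_0$, so $T_0$ is a covering. Because the minimum covering $T$ has size at most that of any covering, $|T|\le|T_0|\le t|\mathcal{M}|$, which is the claimed inequality.

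The argument is entirely routine and there is really no obstacle: the only subtle point is to get the direction of maximality right (maximality of the matching, not maximum size). The minimum covering/maximum matching version of this inequality is the one that typically appears with a proof by LP duality, but here only the weaker ``maximal matching'' version is asserted, and so the greedy union construction above suffices.
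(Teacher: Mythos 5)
Your proof is correct and is essentially identical to the paper's: both take the union of the edges in the maximal matching, observe it has size at most $t|\mathcal{M}|$, and show it is a covering by noting that an uncovered edge could be added to $\mathcal{M}$, contradicting maximality. (You are slightly more careful than the paper's phrasing in writing $\le t|\mathcal{M}|$ rather than $= t|\mathcal{M}|$, since edges may have size strictly less than $t$.)
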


\begin{proof}
Note that $\sqcup \mathcal{M}$ is a covering. Otherwise, if an edge is not covered by $\sqcup \mathcal{M}$ then we can add it to $\mathcal{M}$, contradicting the maximality of $\mathcal M$. The cardinality of $\sqcup \mathcal{M}$ is $t |\mathcal{M}|$. Since $T$ is a minimum covering, $|T| \le t |\mathcal{M}|$.
\end{proof}

\section{Counting monochromatic sets}
\label{section: hypergraph point of view}

\begin{wrapfigure}[14]{l}{0.45\textwidth}
    \label{fig: graph coloring}
    \includegraphics[width=0.4\textwidth]{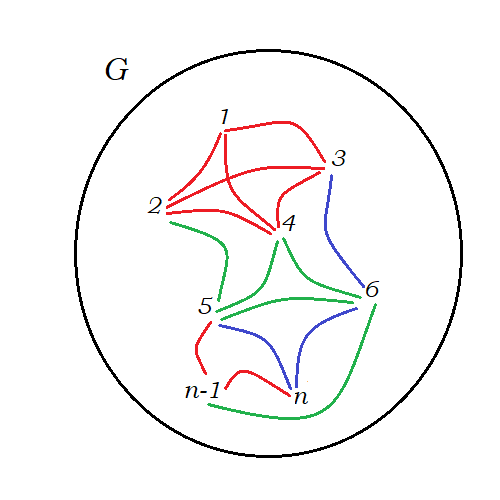}
    \caption{Graph coloring}
\end{wrapfigure}

First, consider the case  $\mSet_S =  1$ for any $S \in \binom{\ell}{2}$. We claim that there is a correspondence between our families and cliques in a colored graph. This connection 
was previously discussed in~\cite{aprile20182}. To illustrate it, we need the following claim:

\begin{claim}
\label{claim: graph point of view}
Let $\ff_1, \ldots, \ff_l$ satisfy the overlapping property and that are, moreover, maximal w.r.t. this property. Then for each $\ff_k$ it holds that
\begin{itemize}
    \item a subset $K$ of $[n]$ is contained in $\ff_k$ if and only if for each pair $i, j \in K$ $\{i, j\}$ is contained in $\ff_k$,
    \item empty set and every  singleton from $[n]$ belongs to $\ff_k$,
\end{itemize}
and, moreover, for each $\{i, j\} \subset [n]$ there is a unique $k'$ such that $\{i, j\} \in \ff_{k'}$.
\end{claim}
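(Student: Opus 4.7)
The plan is to exploit maximality in its simplest form: if $F \subset [n]$ and $k \in [\ell]$ satisfy $|F \cap G| \le 1$ for every $G \in \ff_{k'}$ with $k' \neq k$, then $F \in \ff_k$, since otherwise $\ff_k \cup \{F\}$ would be a legal enlargement preserving overlapping.

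Two consequences come out immediately. First, the second bullet: $|\varnothing \cap G| = 0$ and $|\{i\} \cap G| \le 1$ hold automatically for every $G$, so $\varnothing$ and every singleton lie in every $\ff_k$. Second, each $\ff_k$ is downward closed, because a subset of $F \in \ff_k$ intersects any $G \in \ff_{k'}$ in at most as many elements as $F$ does. The forward implication of the first bullet is then immediate: if $K \in \ff_k$ then all pairs $\{i,j\} \subset K$ belong to $\ff_k$.

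For the converse implication of the first bullet, I would assume every pair $\{i,j\} \subset K$ is in $\ff_k$ and test whether $K$ may be added to $\ff_k$. If some $G \in \ff_{k'}$ with $k' \neq k$ satisfied $|K \cap G| \ge 2$, choosing $i, j \in K \cap G$ would give a pair $\{i,j\} \in \ff_k$ sitting inside $G \in \ff_{k'}$, a direct violation of the overlapping property already enjoyed by the original tuple. Hence $K$ can be adjoined, and maximality puts $K \in \ff_k$.

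It remains to show each pair lies in exactly one family. Uniqueness is automatic: two families containing $\{i,j\}$ would already violate overlapping between themselves. Existence is the subtlest point and I expect it to be the main obstacle. Suppose towards contradiction that $\{i,j\}$ is in no family; then, for each $k$, by the contrapositive of the maximality principle there exists $k' \neq k$ and $G \in \ff_{k'}$ with $\{i,j\} \subset G$. But the key observation is that at most one family contains a superset of $\{i,j\}$, since two such supersets $G_a \in \ff_a, G_b \in \ff_b$ with $a \neq b$ would satisfy $|G_a \cap G_b| \ge 2$. Taking $k$ to be this unique index (or any $k$ at all if no family contains a superset of $\{i,j\}$) contradicts the previous sentence, completing the argument.
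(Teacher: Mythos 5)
Your proof is correct and follows essentially the same strategy as the paper: derive down-closedness from maximality, use it for the forward implication of the first bullet, use the minimality-of-enlargement principle for the converse and for the second bullet, and note that a pair in two families would immediately violate overlapping. The only place where you deviate, and arguably improve slightly on the paper's terseness, is in the existence part of the last assertion. The paper simply says that if $\{i,j\}$ is in no family then adding it anywhere preserves overlapping, which implicitly relies on down-closedness (no family can contain a superset of $\{i,j\}$ if none contains $\{i,j\}$). You instead observe directly that at most one family can contain any superset of $\{i,j\}$ and add $\{i,j\}$ to that one family; this bypasses the down-closedness invocation here and makes the contradiction more explicit. Both routes are short and valid, so this is a minor stylistic difference rather than a genuinely different argument.
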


\begin{proof}
First, if $K$ belongs to $\ff_k$ then each pair $\{i, j\}$ from $K$ belongs to $\ff_k$ because of down-closeness. Conversely, suppose that every $\{i, j\}\subset K$ is contained in $\ff_k$. Then for each $k'\ne k$ and a set $F\in \ff_{k'}$ we have $|F\cap K|\le 1$. Therefore, by maximality of $\ff_1,\ldots, \ff_\ell$, $K\in \ff_k$. 

Second, adding the empty set and singletons does not break the overlapping property, so by maximality they must belong to each family.

Third, if $\{i, j\}$ does not belong to any $\ff_k$ then adding it to any of $\ff_i$ does not break the overlapping property, and so by maximality each $\{i,j\}$ must belong to some $\ff_k$.
\end{proof}

Next, let us construct a coloring of the complete graph $K_n$ on the vertex set $[n]$ into $\ell$ colors based on a maximal collection of families $\ff_1,\ldots, \ff_\ell$ with overlapping property. Color the edge $\{i, j\}$ with color $k$ iff $\{i, j\} \in \ff_k$ and put $E_k = \left \{\{i, j\} : i \neq j \text{ and } \{i, j\} \in \ff_k \right \}$. 
Note that the cardinality of $\ff_k$ is equal to the number of cliques in the graph $G_k := ([n], E_k)$, induced by the edges of $k$-th color. Conversely, any coloring of any graph $G$ on vertex set $[n]$ is associated with some families of subsets of $[n]$ with overlapping property. For example, the coloring on Figure~\ref{fig: graph coloring} produces the following families of sets:
\begin{align*}
    {\color{red} \ff_1} & = \binom{[n]}{\le 1}  \cup 2^{\{1, 2, 3, 4\}} \cup \{\{5, n-1\}, \{n-1, n\}\}, \\
    {\color{green} \ff_2} & = \binom{[n]}{\le 1}  \cup 2^{\{4, 5, 6\}} \cup \{\{6, n-1\}, \{2, 5\}\}, \\
    {\color{blue} \ff_3} & = \binom{[n]}{\le 1}  \cup \{\{5, n\}, \{6, n\}, \{3, 6\}\}.
\end{align*}

Let us briefly discuss how to generalize this correspondence to the $\mSet$-overlapping setting.

Previously, we could as well consider the complement of $G_k$ and  and count independent sets in this complement. It is natural to generalize this point of view. Consider a maximal collection $\ff_1,\ldots,\ff_\ell$ that are $\mSet$-overlapping. For each family $\ff_k$ we construct the following hypergraph:
\begin{align*}
    H_k = \left (
        [n],
        \bigcup_{k' \in [\ell] \setminus \{k\}}
            \ff_{k'}^{(\mSet_{k, k'} + 1)}
    \right ),
\end{align*}
where $\ff^{(t)}$ denotes $\ff \cap \binom{[n]}{t}$.

We call  $I\subset [n]$ \textit{independent} in the hypergraph $H_k$ if it does not contain any edge of $H_k$. 
We claim that the number of independent sets in $H_k$ equals $|\ff_k|$.

\begin{claim}
\label{claim: hypergraph point of view}
    If $\ff_1,\ldots,\ff_\ell$ are maximal and $\mSet$-overlapping then $\ff_k$ consists of all independent sets in $H_k$, where $H_k$ is defined above.
\end{claim}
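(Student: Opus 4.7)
The plan is to prove the two inclusions separately, after first observing that maximality forces each $\ff_k$ to be down-closed.

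First I would note that any collection of families maximal with respect to the $\mSet$-overlapping property must be down-closed: if $A \in \ff_k$ and $B \subset A$, then for every $k' \ne k$ and every $G \in \ff_{k'}$ we have $|B \cap G| \le |A \cap G| \le \mSet_{k,k'}$, so adding $B$ to $\ff_k$ preserves the $\mSet$-overlapping property, and maximality forces $B \in \ff_k$.

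Next, the easy direction: if $F \in \ff_k$, I would show $F$ is independent in $H_k$. Suppose not; then $F$ contains some edge $e$ of $H_k$, meaning $e \in \ff_{k'}^{(\mSet_{k,k'}+1)}$ for some $k' \ne k$. But then $|F \cap e| = |e| = \mSet_{k,k'} + 1$, contradicting the $\mSet$-overlapping property applied to $F \in \ff_k$ and $e \in \ff_{k'}$.

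For the converse, let $I \subset [n]$ be independent in $H_k$; I would argue that $I$ can be added to $\ff_k$ without breaking the $\mSet$-overlapping property, so by maximality $I \in \ff_k$. Take an arbitrary $k' \ne k$ and any $G \in \ff_{k'}$, and suppose for contradiction that $|I \cap G| \ge \mSet_{k,k'} + 1$. Pick any $e \subset I \cap G$ with $|e| = \mSet_{k,k'} + 1$. By down-closure of $\ff_{k'}$ applied to $G$, we have $e \in \ff_{k'}$, hence $e \in \ff_{k'}^{(\mSet_{k,k'}+1)}$, which makes $e$ an edge of $H_k$ contained in $I$. This contradicts independence of $I$. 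Hence $|I \cap G| \le \mSet_{k,k'}$ for all such $G$, and the overlapping property is preserved.

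No step looks like a genuine obstacle; the only subtlety is remembering to invoke down-closure of the \emph{other} families $\ff_{k'}$ (which follows from the first observation applied to each family) in order to pass from "$I \cap G$ is too large" to "some edge of $H_k$ lies inside $I$". This is exactly where the definition of $H_k$ as a union over $k' \ne k$ of the $(\mSet_{k,k'}+1)$-level slices plays its role.
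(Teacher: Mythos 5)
Your proposal is correct and follows essentially the same two-implication route as the paper's sketch proof; you simply make explicit the down-closure of the other families $\ff_{k'}$, which the paper leaves implicit but which is exactly what is needed to pass from ``$|I \cap G| > \mSet_{k,k'}$'' to ``some $(\mSet_{k,k'}+1)$-element edge of $H_k$ lies inside $I$.''
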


\begin{proof}[Sketch of the proof]
It follows from two implications:
\begin{enumerate}
    \item If $I$ is independent in $H_k$ then it intersects any set of $\ff_{k'}$, $k' \in [\ell]\setminus \{k\}$, in at most $\mSet_{k, k'}$ elements. Thus, it is contained in $\ff_k$ due to maximality.
    \item If $F \in \ff_k$ then it intersects any set of $\ff_{k'}$, $k' \in [\ell]\setminus \{k\}$, in at most $\mSet_{k, k'}$ elements. Thus, $F$ is an independent set in the hypergraph $H_k$.
\end{enumerate}
\end{proof}


In addition, if all entries of $\mSet$ are equal to $m$, then $H_k$ becomes the complement to the subhypergraph of $K^{m + 1}_n$ consisting of all edges colored into color $k$. This proves Proposition~\ref{proposition: formulation equivalence}. (Recall that we assume that any $m$ or less vertices form a monochromatic clique in any color.)

\section{Sketch of the proof of  Theorem~\ref{theorem: maximal families structure}}
\label{section: sketch of the proof}

\begin{wrapfigure}[16]{r}{0.45\textwidth}
    \centering
    \includegraphics[width=0.4\textwidth]{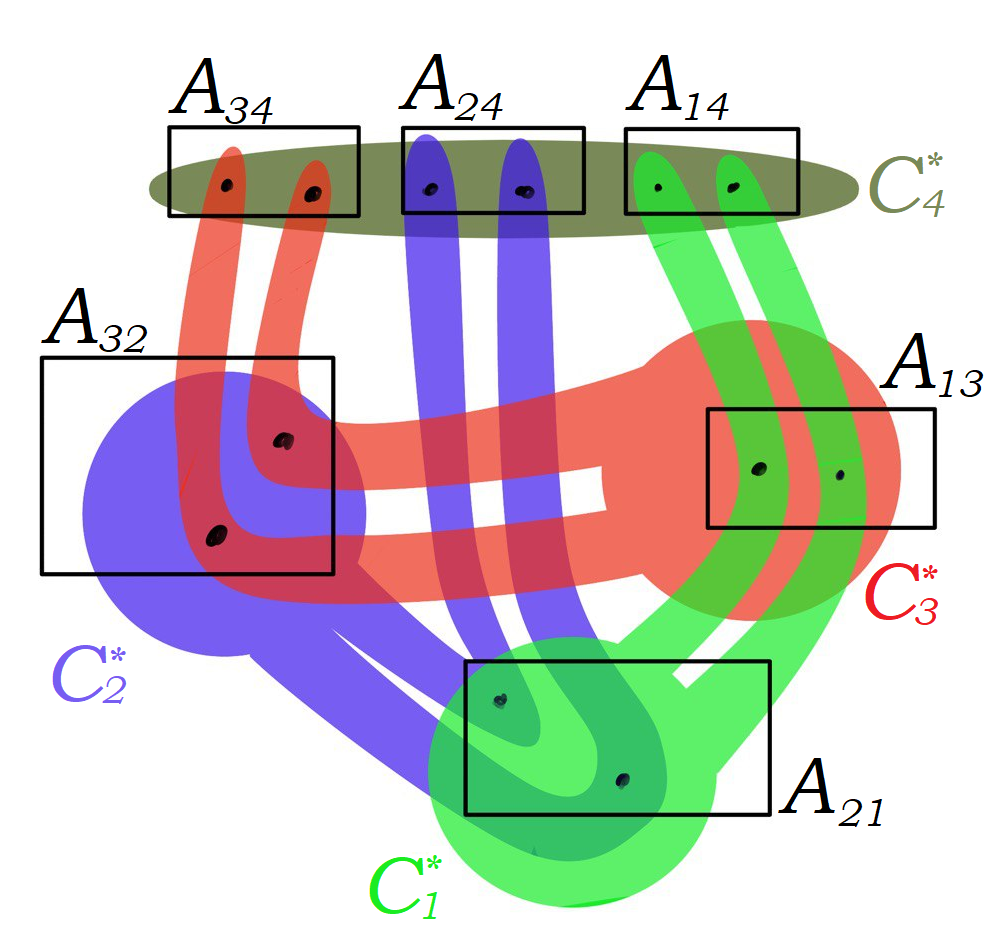}
    \caption{Octopuses described in Section~\ref{section: sketch of the proof}.}
    \label{fig: octopuses}
\end{wrapfigure}

Throughout this section, we assume that $\ff_1,\ldots, \ff_\ell$ are $\mSet$-overlapping and extremal (that is, maximize the product of cardinalities).

 First, we find the lower bound that matches the asymptotic in~\eqref{eq: target function asymptotics}.
The construction of the example is based on the following guess: since $\targetFunction$ is proportional to $2^n$, each $\ff_k$, $k \in [\ell]$ should contain some ``center'' set $\probabilisticCenter_k$ such that $\ff_k$ contains all subsets of $\probabilisticCenter$ and these sets together cover $[n]$  completely. That guarantees that $2^{\probabilisticCenter_k} \subset \ff_k$ and that the ``exponential part'' of the product $\prod_{k \in [\ell]} |\ff_k|$ is $2^n$. The polynomial part of $|\ff_k|$ arises from concatenations of subsets from $\probabilisticCenter_k$ and elements from other centers. That makes a family $\ff_k$ look like an ``octopus'' with ``body'' $\probabilisticCenter_k$ and ``tentacles'' directed to the centers of others. For instance, in the case of $m = 1$, a family $\ff_1$ presented on Figure~\ref{fig: octopuses} can be decomposed as follows:
\begin{align*}
    {\color{green} \ff_1} \simeq 2^{\color{green} \probabilisticCenter_1} \vee \binom{A_{13}}{\le 1} \vee \binom{A_{14}}{\le 1}
\end{align*}
with the body $\probabilisticCenter_1$ and tentacles $\binom{A_{13}}{ \le 1} \vee \binom{A_{14}}{\le 1}$. We use ``$\simeq$'' instead of ``$=$'' because in extremal examples we need to add all small sets, but they typically account for  a negligibly small fraction of the family.

This summarizes the rough structure on which we based the example for the lower bound in Section~\ref{section: lower bound}. In what follows, we discuss the upper bound.

The proof of the upper bound is based on a bootstrapping idea: establishing the asymptotic helps to obtain understanding of the structure of extremal examples and vice versa. First, we prove that a set $M_k$ of maximal cardinality from each family $\ff_k$ can be considered as a proxy of the center $\probabilisticCenter_k$. More precisely, we prove that $\left |[n]\setminus \bigcup_{k \in [\ell]} M_k \right | = O(\log n)$ using Rinott---Saks inequality for $p$-biased measures (Corollary~\ref{corollary: rinott-saks p-biased}). The detailed argument is given in Lemma~\ref{lemma: log residual}.

In what follows, we will use the definition of a (normalized) \textit{degree} of a set with respect to some family. The degree $d_k$ of a set $F$ in a family $\ff_k$ is
\begin{align*}
    d_k(F) := \frac{|\{F' \in \ff_k \mid F \subset F'\}|}{|\ff_k|}.
\end{align*}
The degree $d_k(x)$ of an element $x \in [n]$ is just $d_k(\{x\})$.

We next show that an inductive application of Daykin's inequality (Theorem~\ref{theorem: Daykin's inequality}) over $k\in \ell$ delivers the asympotic of $\targetFunction$ up to a constant factor. It allows to have a good control on the  degrees: given a subset $K \subset [\ell]$ and sets $F_k \in \ff_k$, $k \in K$, we have
\begin{align}
\label{eq: general degree tradeoff}
    \prod_{k \in K} d_k(F_k) = O \left ( n^{- \sum_{\{k, k'\} \in \binom{K}{2}} |F_k \cap F_{k'}|}\right ),
\end{align}
and, in particular,
\begin{align}
    \label{eq: degree tradeoff}
    d_{k_1}(x) d_{k_2}(x) d_{k_3}(x) = O(n^{-3})
\end{align}
for any distinct $k_1,k_2, k_3$ and $x \in [n]$.
Indeed, it is easy to see that families $\ff_k(F_k)$, $\ff_{k'}$ for $k \in K$ and $k' \in [n] \setminus K$ are $\mSet'$-overlapping for a suitable $\mSet'\le \mSet$. Using the upper bound for  $s^*(n, \ell, \mSet')$ that follows from iterative Daykin's inequality applications and the lower bound for $\targetFunction$ allows to obtain~\eqref{eq: general degree tradeoff} and~\eqref{eq: degree tradeoff}.

The entropy argument of Proposition~\ref{proposition: big x fraction} guarantees that most of the elements of $[n]$ have positive constant degrees in some $\ff_k$. If we denote the $k$-th least normalized degree by $d_{(k)}(x)$ and the index of corresponding family by $(k)(x)$, we notice that for most of elements in $[n]$, $d_{(l)}(x)$ has constant lower bound, and, consequently, $d_{(l - 2)}(x) = O(n^{-3/2})$ due to~\eqref{eq: degree tradeoff}. In this way, removing suitable sets, we are able to prune families $\ff_k$ such that their size changes by a factor $\left (1 - O(n^{-1/2}) \right )$  and $d_k(x) = 0$ if $k \not \in \{(\ell)(x), (\ell - 1)(x)\}$. In other words, in the modified families each element has non-zero degree only in two families that correspond to two initial families in which $d_k(x)$ was   the first and the second largest. While the set of maximal cardinality $M_k$ is a proxy of the octopus's body of $\ff_k$, the set $\{x \in [n] \mid k = (l-1)(x)\}$ is a proxy of its tentacles.

We denote these pruned families by $\ff_k'$. By construction, for any $K \subset [\ell]$ of cardinality greater than 2, we have $\bigwedge_{k \in K} \ff_k' = \{\varnothing\}$. That significantly simplifies the Rinott-Saks inequality for families of sets (Corollary~\ref{corollary: rinott sets theorem}), since among multipliers from the right-hand side only two factors remain. They can be bounded as follows:
\begin{align}
\label{eq: upper bound in proof sketch}
    \left | \bigvee_{S \in \binom{[\ell]}{2}} \bigwedge_{s \in S} \ff_s' \right | & \le \prod_{S \in \binom{[\ell]}{2}} \left | \binom{\support \wedge_{s \in S} \ff_{s}'}{\le \mSet_S} \right |, \\
    \left | \bigvee_{k \in [\ell]} \ff_k' \right | & \le 2^n. \nonumber
\end{align}
Optimizing over sizes of disjoint sets $\support(\wedge_{s \in S} \ff_s')$, we obtain tight upper bound of $\targetFunction$ up to a factor $1 + O(n^{-1/2})$. (Note that the error term $O(n^{-1/2})$ is an artefact of the pruning that we did.)

In the follow-up paper~\cite{KupaNos2}, we will improve the error term and determine a bulk of the structure of extremal examples. The tentacle analogy is very useful in understanding their structure.

\section{Proof of the lower bound}
\label{section: lower bound}

In this section, we provide a construction that gives the lower bound in Theorem~\ref{theorem: maximal families structure}.

\begin{theorem}
\label{theorem: general lower bound}
There are families $\ff_1, \ldots, \ff_\ell \in 2^{[n]}$ satisfying $\mSet$-overlapping property such that
\begin{align*}
    \prod_{k = 1}^\ell
        |\ff_k|
    =
    \left (1+O(n^{-1}) \right ) 2^n \cdot
    \prod_{S\in{[\ell]\choose 2}}
        \bigg(
            \frac 1{\mSet_S!}
            \Big(
                \frac{
                    \mSet_S\cdot n
                }{\sigma}
            \Big)^{\mSet_S}
        \bigg) = \left (1+O(n^{-1}) \right ) C n^{\sigma}2^{n},
\end{align*}
where $\sigma = \sum_{S\in {[\ell]\choose 2}} \mSet_S$ and $C$ is a constant depending on $\ell$ and $\mSet$ only.
\end{theorem}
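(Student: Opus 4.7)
The plan is to build an explicit ``octopus'' family as foreshadowed in Section~\ref{section: sketch of the proof}. I would first fix an orientation of every pair $S = \{k, k'\} \in \binom{[\ell]}{2}$: choose a receiver $r(S) \in S$, e.g.\ the smaller index, so that each pair has one designated ``owner.'' Next I would partition $[n] = \bigsqcup_{S} A_S$ with $|A_S| = \lfloor \mSet_S \, n / \sigma \rfloor$, placing the finitely many leftover elements arbitrarily; rounding only affects the final count by a factor $1 + O(1/n)$. Declare the body of $\ff_k$ to be $\probabilisticCenter_k := \bigcup_{S : r(S) = k} A_S$, so the bodies partition $[n]$. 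Finally, set
\begin{align*}
    \ff_k := \Big\{\, M \cup \bigcup_{S \ni k,\, r(S) \ne k} T_S \;:\; M \subset \probabilisticCenter_k,\; T_S \subset A_S,\; |T_S| \le \mSet_S \,\Big\},
\end{align*}
i.e.\ $2^{\probabilisticCenter_k}$ joined with tentacles of size $\le \mSet_S$ reaching into the bodies of the receivers.

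I would verify the $\mSet$-overlapping property by direct computation. For $F \in \ff_k$ and $F' \in \ff_{k'}$, writing $F = M \cup \bigcup T_S$ and $F' = M' \cup \bigcup T'_{S'}$ and decomposing $F \cap F'$ according to the body partition, all cross-terms involving different bodies vanish because the bodies are pairwise disjoint and each tentacle lives in a specific body. The only possibly nonzero piece is $F \cap F' \cap A_{\{k,k'\}}$: if $r(\{k,k'\}) = k$ then $A_{\{k,k'\}} \subset \probabilisticCenter_k$ and $F'$ treats it as a tentacle, giving $|F \cap F' \cap A_{\{k,k'\}}| \le |T'_{\{k,k'\}}| \le \mSet_{k,k'}$; the symmetric case is analogous. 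The asymmetry introduced by the orientation is exactly what prevents the naive $2 \mSet_{k,k'}$ double-count that would arise if both $\ff_k$ and $\ff_{k'}$ simultaneously reached into each other's bodies.

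It then remains to compute the product. Since each $S$ appears in exactly one factor (that of the non-receiver $k \in S$), one obtains
\begin{align*}
    \prod_{k=1}^{\ell} |\ff_k| \;=\; 2^{\sum_k |\probabilisticCenter_k|} \prod_{k=1}^{\ell} \prod_{S \ni k,\, r(S) \ne k} \binom{|A_S|}{\le \mSet_S} \;=\; 2^n \prod_{S \in \binom{[\ell]}{2}} \binom{|A_S|}{\le \mSet_S}.
\end{align*}
With $|A_S| = \Theta(n)$, a standard estimate yields $\binom{|A_S|}{\le \mSet_S} = \frac{1}{\mSet_S!}\bigl(\mSet_S n / \sigma\bigr)^{\mSet_S}\bigl(1 + O(1/n)\bigr)$, and multiplying over $S$ delivers the claimed bound. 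The specific choice $a_S = \mSet_S n / \sigma$ is the unique maximizer of $\prod_S a_S^{\mSet_S}/\mSet_S!$ under $\sum_S a_S = n$ (immediate from Lagrange multipliers), matching the coefficients in the formula. I expect no serious obstacle here; the one conceptual point worth flagging is that orienting each pair is strictly better than splitting a tentacle between the two owners, as the concavity of $\log \binom{a}{m}$ makes concentration on a single side dominant, which is why the ``asymmetric octopus'' is the right construction and not a symmetric one.
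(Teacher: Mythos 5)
Your construction is identical to the paper's: you pick the smaller index as the ``receiver'' for each pair $S$, which reproduces exactly the paper's $\ff_k = 2^{\bigcup_{k'>k}A_{k,k'}} \vee \bigvee_{k'<k}\binom{A_{k',k}}{\le\mSet_{k,k'}}$, and the verification of the $\mSet$-overlapping property and the computation $\prod_k|\ff_k| = 2^n\prod_S\binom{|A_S|}{\le\mSet_S}$ match as well. The only cosmetic difference is that you plug in the optimal block sizes directly and justify them by Lagrange multipliers, while the paper sets up the optimization over $\nv$ and invokes the cross-entropy minimization of Proposition~\ref{proposition: cross-entropy}; both routes give the same answer with the same error term.
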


\begin{table}
    \centering
    \begin{tabular}{c c | l}
         $\probabilisticCenter_{\ell - 1}$ & $=$ & $A_{\ell-1, \ell}$  \\
         $\probabilisticCenter_{\ell - 2}$ & $=$ & $A_{\ell-2, \ell} \cup A_{\ell-2, \ell-1}$  \\
         $\ldots $ & & \quad \ldots  \\
         $\probabilisticCenter_2$ & $=$ & \, $A_{2, \ell} \,\, \cup \, A_{2, \ell-1} \, \cup \ldots \cup A_{2, 3}$  \\
         $\probabilisticCenter_1$ & $=$ & \, $A_{1, \ell} \,\, \cup  \, A_{1, \ell-1} \, \cup \ldots \cup A_{1, 3} \cup A_{1, 2}$ \\
         \hline
         & & $ \;\; \ff_\ell \quad \quad \, \ff_{\ell - 1} \quad \;\;\, \ldots \quad \ff_{3} \quad \quad \ff_2$
    \end{tabular}
    \caption{In our example for Theorem~\ref{theorem: general lower bound} each set in the family $\ff_k$ consists of two parts: an arbitrary subset of the  center $\probabilisticCenter_k = \bigcup_{k' > k} A_{k, k'}$ (note that $\probabilisticCenter_\ell = \varnothing$) and subsets (``tentacles'') of size at most $\mSet_{k',k}$ in each of the domains $A_{k',k},$ $k'<k$. 
    The rows of the table above are indexed by the corresponding domains of the families and the columns are indexed by the families, where the column indexed by $\ff_i$ consists of the ``target sets'' of the tentacles of sets from $\ff_i$. Note that $\ff_1$ has no tentacles.}
    \label{tab:my_label}
\end{table}

\begin{proof}
Consider some vector $\nv$ with coordinates indexed by $S$,  $S\in {[\ell]\choose 2}$, and a partition  of the set $[n]$ into $\binom{l}{2}$ sets $A_{k, k'}$, $1 \le k < k' \le \ell$, such that $|A_{k, k'}| = \nv_{k, k'}$. Then,
define (cf. Table~\ref{tab:my_label})
\begin{align*}
    \ff_k =
    \left (
        2^{\bigcup_{k' > k} A_{k, k'}}
    \right )
    \lor
    \bigvee_{k' < k} \binom{
        A_{k', k}
    }{
        \le \mSet_{k, k'}
    }.
\end{align*}
Obviously, if $F \in \ff_{k_1}$, $G \in \ff_{k_2}$ and $k_1 < k_2$, then the intersection of $F$ and $G$ is contained  in $A_{k_1, k_2}$. At the same time, by definition, each set from  $\ff_{k_2}$ contains at most $\mSet_{k_1,k_2}$ elements in $A_{k_1,k_2}$. 
Thus, $\ff_1,\ldots,\ff_\ell$ satisfy the $\mSet$-overlapping property.

It is easy to see that
\begin{align*}
    |\ff_k| = \prod_{k' < k} \binom{|A_{k', k}|}{\le \mSet_{k', k}}
    2^{\sum_{k' > k} |A_{k, k'}|},
\end{align*}
and, consequently,
\begin{align*}
    \prod_{k = 1}^\ell
        |\ff_k|
    =
    \prod_{S \in \binom{[\ell]}{2}}
        \binom{\nv_S}{\le \mSet_S}
    2^n.
\end{align*}
Maximizing over $\nv$ delivers the following optimization problem:
\begin{align}
    & \max_{\nv}
    \prod_{S \in \binom{[\ell]}{2}}
        \left (
            \sum_{t = 0}^{\mSet_S}
                \binom{
                    \nv_S
                }{
                    t
                }
        \right ), \label{eq: optimization problem} \\
    & \text{ s.t. }
    \sum_{S \in \binom{[\ell]}{2}}
        \nv_S
    = n, \nonumber
\end{align}
where $\nv$ is a vector of non-negative integers. To determine the asymptotic of the solution, note that
\begin{align*}
    \sum_{t = 0}^{\mSet_S}
        {\nv_S \choose t}
    \sim
    {\nv_S  \choose \mSet_S}
    \sim
    \frac{
        \nv_S^{\mSet_S}
    }{
        \mSet_S !
    }.
\end{align*}
Maximizing the product of these expressions is equivalent to maximizing the sum of their logarithms. Thus, ignoring lower order terms, the target function of the optimization problem~\eqref{eq: optimization problem} can be changed to
\begin{align*}
    \max_{\nv}
        \sum_{S \in {[\ell] \choose 2}}
            \frac{\mSet_S}{\sigma} \log \frac{\nv_S}{n}.
\end{align*}
The last expression is the minus cross-entropy between discrete distributions $\left (\mSet_S/\sigma \right )_{S \in {[\ell] \choose 2}}$ and $(\nv_S / n)_{S \in {[\ell] \choose 2}}$. By Proposition~\ref{proposition: cross-entropy}, its maximum is achieved when distributions coincide, which proves the lower bound and, moreover, shows that the corresponding example is optimal in the class of examples that we considered.
\end{proof}

\section{Proof of the upper bound}
\label{section: proof of theorem}



We employ the following standard notation for a family $\ff$ and sets $A\subset B:$
$$\ff|_B:=\{F\cap B: F\in \ff\},$$
$$\ff(A,B):=\{F \setminus B : F \in \ff \text{ and } F\cap B = A\},$$
$$\ff(A) := \ff(A, A),$$
$$\ff \left (\overline{A} \right ):= \ff(\varnothing, A).$$
When dealing with singletons, we suppress brackets for simplicity, i.e. $\ff(x) = \ff(\{x\})$ and $\ff(\overline{x})=\ff \left (\overline{\{x\}} \right )$.

In what follows, we  work with families $\ff_1,\ldots,\ff_\ell$ that are $\mSet$-overlapping and that are extremal, i.e., that maximize the product. Due to extremality, they possess certain useful properties, in particular, they must be down-closed. We call a collection of families $\ff_{k}, k \in K \subset [\ell]$ \textit{extremal} if they arise in some extremal example.

\subsection{Maximal sets cover $[n]$ almost completely}

\begin{lemma}
\label{lemma: log residual}
Let $\ff_1, \ldots, \ff_\ell$ be a collection of $\mSet$-overlapping extremal families and let $\setCenter_k \in \ff_k$, $k \in [\ell]$ be the sets of maximal cardinality in the respective families. Define $\residualSet = [n] \setminus \bigcup_{k = 1}^l \setCenter_k$. Then there is a constant $C$ such that
\begin{align*}
    |\residualSet| \le C \log_2 n.
\end{align*}
\end{lemma}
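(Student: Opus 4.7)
The plan is to sandwich $\sum_k |\setCenter_k|$ between $n - O(\log n)$ and $(n - |\residualSet|) + \sigma$ (where $\sigma = \sum_{\{k, k'\}} \mSet_{k, k'}$), which immediately forces $|\residualSet| = O(\log n)$.

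The upper bound $\sum_k |\setCenter_k| \le (n - |\residualSet|) + \sigma$ follows from inclusion--exclusion combined with the overlap constraint $|\setCenter_k \cap \setCenter_{k'}| \le \mSet_{k, k'}$ (which is immediate since $\setCenter_k \in \ff_k$ and $\setCenter_{k'} \in \ff_{k'}$):
\begin{align*}
\sum_k |\setCenter_k| \le \Big|\bigcup_k \setCenter_k\Big| + \sum_{k < k'} |\setCenter_k \cap \setCenter_{k'}| \le (n - |\residualSet|) + \sigma.
\end{align*}

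The lower bound $\sum_k |\setCenter_k| \ge n - O(\log n)$ is the main content. First I would apply Corollary~\ref{corollary: rinott-saks p-biased} (Rinott--Saks for $p$-biased measures) with $p = 1/2$ to $(\ff_k)_{k \in [\ell]}$: each set in $\bigwedge_{s \in S} \ff_s$ (for $|S| \ge 2$) has cardinality at most $\min_{\{s, s'\} \in \binom{S}{2}} \mSet_{s, s'}$ by the $\mSet$-overlapping property, so each set in $\bigvee_{S \in \binom{[\ell]}{k}} \bigwedge_{s \in S} \ff_s$ (for $k \ge 2$) has cardinality bounded by a constant $c_k$ depending only on $\ell$ and $\mSet$, contributing a factor $O(n^{c_k})$; together with $|\bigvee_j \ff_j| \le 2^n$, the Corollary yields $\prod_k |\ff_k| \le O(n^C 2^n)$. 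Combined with Theorem~\ref{theorem: general lower bound}, which provides the matching lower bound $\prod_k |\ff_k| \ge \Omega(n^\sigma 2^n)$, and an individual polynomial upper bound $|\ff_k| \le O(n^{t_k}) \cdot 2^{|\setCenter_k|}$ with $\sum_k t_k = O(\sigma)$ --- obtained by counting each $F \in \ff_k$ through its intersections with the sets $\setCenter_{k'}$: $F \cap \setCenter_k$ an arbitrary subset (contributing $2^{|\setCenter_k|}$ choices), $F \cap \setCenter_{k'}$ of size at most $\mSet_{k, k'}$ by the overlap property for $k' \ne k$ (contributing $O(n^{\mSet_{k,k'}})$ choices), and $F \cap \residualSet$ controlled by the maximality $|F| \le |\setCenter_k|$ --- we obtain $2^{\sum_k |\setCenter_k|} \cdot O(n^{O(\sigma)}) \ge \Omega(n^\sigma 2^n)$, i.e., $\sum_k |\setCenter_k| \ge n - O(\log n)$. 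Combining with the upper bound then gives $|\residualSet| \le O(\log n)$.

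The main obstacle is establishing the polynomial upper bound $|\ff_k| \le O(n^{t_k}) \cdot 2^{|\setCenter_k|}$ without an extraneous $2^{|\residualSet|}$ factor: the $\mSet$-overlapping property directly bounds $F \cap \setCenter_{k'}$ for $k' \ne k$ but does not a priori restrict $F \cap \residualSet$. One must leverage the maximality constraint $|F| \le |\setCenter_k|$ to show that a large intersection with $\residualSet$ forces a correspondingly small intersection with $\setCenter_k$, so that the contributions balance and the total count from $\residualSet$ remains polynomial rather than exponential.
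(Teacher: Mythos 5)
The overall sandwich $\sum_k |\setCenter_k| \le (n - |\residualSet|) + \sigma$ (by Bonferroni) and $\sum_k |\setCenter_k| \ge n - O(\log n)$ is a sound strategy, and your inclusion--exclusion upper bound is fine. But the crucial lower bound has a genuine gap, which you yourself flag without closing: you need $|\ff_k| \le n^{O(1)} 2^{|\setCenter_k|}$, and the method you sketch for obtaining it --- counting $F \in \ff_k$ by choosing $F \cap \setCenter_{k'}$, $F \cap \residualSet$, and $F \cap \setCenter_k$, using only the size constraint $|F| \le |\setCenter_k|$ to control the $\residualSet$ contribution --- does not work. If $F \cap \residualSet = G$ with $|G| = g$, the constraint $|F| \le |\setCenter_k|$ only forces $|F \cap \setCenter_k| \le |\setCenter_k| - g$, and $\sum_{i \le |\setCenter_k|-g} \binom{|\setCenter_k|}{i}$ is still $(1 - o(1)) 2^{|\setCenter_k|}$ whenever $g = o(\sqrt{n})$. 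Since the target bound $|\residualSet| = O(\log n)$ is far below $\sqrt{n}$, the size constraint gives essentially zero savings, and you are left with the unwanted $2^{|\residualSet|}$ factor. In short, the ``contributions balance'' you invoke are quantitatively off by an enormous margin.

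The paper's proof closes exactly this gap with a structural, not a size-based, argument. For each $F \in \ff_k|_\residualSet$ of size $g$, it forms a hypergraph on $F \cup \setCenter_k$ from the traces of the other families, and uses extremality to show that the induced hypergraph on $\setCenter_k$ must have vertex-cover number at least $g$: otherwise one could swap $F$ in and part of $\setCenter_k$ out and produce a set in $\ff_k$ strictly larger than $\setCenter_k$. Proposition~\ref{proposition: covering and matching relation} then yields a matching of size $\ge g/m$, and each matched edge multiplies the count of admissible traces on $\setCenter_k$ by a factor $(1 - 2^{-m})$ --- an exponential-in-$g$ savings, not the negligible one you get from the size constraint alone. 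Finally, summing the resulting weights $\varepsilon_m^{|F|}$ over $F \in \ff_k|_\residualSet$ across all $k$ requires the $p$-biased version of Rinott--Saks (Corollary~\ref{corollary: rinott-saks p-biased}), whereas you only invoke the $p = 1/2$ case for a preliminary polynomial bound --- that step is in itself correct (and parallels Proposition~\ref{proposition: weak lower bound}), but it is not the role Rinott--Saks plays in the paper's proof of this lemma. The covering/matching step and the $p$-biased correlation step are the two ideas your outline is missing.
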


\begin{proof}
We have the following decomposition for $\ff_k$:
\begin{align*}
    |\ff_k| = \sum_{F \in \ff_k|_{\residualSet}} |\ff_k(F, \residualSet)|.
\end{align*}

It follows from the definitions that
\begin{align*}
    |\ff_k(F, \residualSet)| \le \big| \ff_k(F, \residualSet)|_{\setCenter_k} \big|
    \left |
        \ff_k|_{\bigcup_i \setCenter_i \setminus \setCenter_k}
    \right |.
\end{align*}
To bound the size of $\ff_k(F, \residualSet)|_{\setCenter_k}$, consider a hypergraph $H$:
\begin{align*}
    H := \left (
        F \cup \setCenter_k,
        \bigcup_{k'\in [\ell]\setminus \{k\}}
            \left (
                \ff_{k'}|_{F \cup \setCenter_k}
            \right )^{(\mSet_{k, k'} + 1)}
    \right )
\end{align*}
and its induced hypergraph $H'$:
\begin{align*}
    H' := \left (
        \setCenter_k,
        E(H)|_{\setCenter_k}
    \right ).
\end{align*}
Any edge of $H$ intersects both $F$ and $\setCenter_k$ since both $F$ and $\setCenter_k$ are contained in $\ff_k$, and thus cannot contain a set of size $m_{k,k'}+1$ from $\ff_{k'}$. 
Consider a vertex cover $T$ for $H'$ and consider a set $(\setCenter_k \setminus T) \cup F$. This set does not contain any edge from $H$ and hence it should belong to $\ff_k$ due to maximality. At the same time, its size is equal to $|\setCenter_k| + |F| - |T|,$ which is at most $|\setCenter_k|$ due to maximality of $\setCenter_k$. Consequently, $|T| \ge |F|$, i.e. covering number of $H'$ is at least $|F|$.

Slightly abusing notation, put $m = \max_{k, k'} \mSet_{k, k'}$. Any edge of $H'$ has size at most $m$ and the vertices of any maximal matching in $H'$ form a vertex cover for $H$. Thus the size of the largest matching in $H'$ is at least $|F|/m$ by Proposition~\ref{proposition: covering and matching relation}. Denote one such matching by $\mathcal{M} \subset E(H')$. We  bound from above $|\ff_k(F, \residualSet)|_{\setCenter_k}|$ using that none of the sets in $\ff_k(F, \residualSet)|_{\setCenter_k}$ can contain an edge from $\mathcal M$.
\begin{align*}
    |\ff_k(F, \residualSet)|_{\setCenter_k}|
    \le
    2^{|\setCenter_k| - |\bigsqcup \mathcal{M}|}
    \prod_{e \in \mathcal{M}}
    (2^{|e|} - 1)
    =
    2^{|\setCenter_k|}
    \prod_{e \in \mathcal{M}}
    (1 - 2^{-|e|})
    \le
    (1 - 2^{-m})^{\frac{|F|}{m}} 2^{|\setCenter_k|}.
\end{align*}

At the same time, it is easy to see that $F\in \ff_k$ satisfies $|F\cap \setCenter_i|\le \mSet_{i,k}$ for every $i\ne k$, and thus $\ff_{k}|_{\bigcup_i \setCenter_i \setminus \setCenter_k}$ has cardinality at most $n^{m(\ell - 1)}$. Thus, the size of $\ff_k$ can be bounded as follows:
\begin{align*}
    |\ff_k|
    \le
    n^{m(\ell-1)}
    2^{|\setCenter_k|}
    \sum_{F \in \ff|_{\residualSet}}
        (1 - 2^{-m})^{
            \frac{|F|}{m}
        }.
\end{align*}
Denote $(1 - 2^{-m})^{\frac{1}{m}}$ by $\varepsilon_m$ and note that $\varepsilon_m<1$ is some constant depending on $m$ only.

Consequently, we can bound the product as follows
\begin{align*}
    \prod_{k = 1}^\ell
        |\ff_k|
    \le
    n^{m \ell (\ell -1)}
    2^{\sum_{k = 1}^\ell |\setCenter_k|}
    \prod_{k = 1}^\ell
    \left (
        \sum_{F \in \ff_{k}|_{\residualSet}}
            \varepsilon_m^{|F|}
    \right ).
\end{align*}
It is easy to see that $\sum_{k = 1}^\ell |\setCenter_k| \le |\bigcup_{k = 1}^\ell \setCenter_k| + \frac{m \ell (\ell - 1)}{2}$, and, thus,
\begin{align}
    \prod_{k = 1}^\ell
        |\ff_k|
    & \le
    n^{m \ell (\ell -1)}
    2^{|\bigcup_{k = 1}^\ell \setCenter_k| + \frac{m \ell (\ell-1)}{2}}
    \prod_{k = 1}^\ell
    \left (
        \sum_{F \in \ff_{k}|_{\residualSet}}
            \varepsilon_m^{|F|}
    \right ) \notag \\
    &
   \label{eq11} \le
    n^{m \ell (\ell -1)}
    2^{n - |\residualSet| + \frac{m \ell(\ell-1)}{2}}
    \prod_{k = 1}^\ell
    \left (
        \sum_{F \in \ff_{k}|_{\residualSet}}
            \varepsilon_m^{|F|}
    \right ).
\end{align}

\begin{figure}
    \label{fig: bipartite hypergraph}
    \centering
    \includegraphics[width=0.5\textwidth]{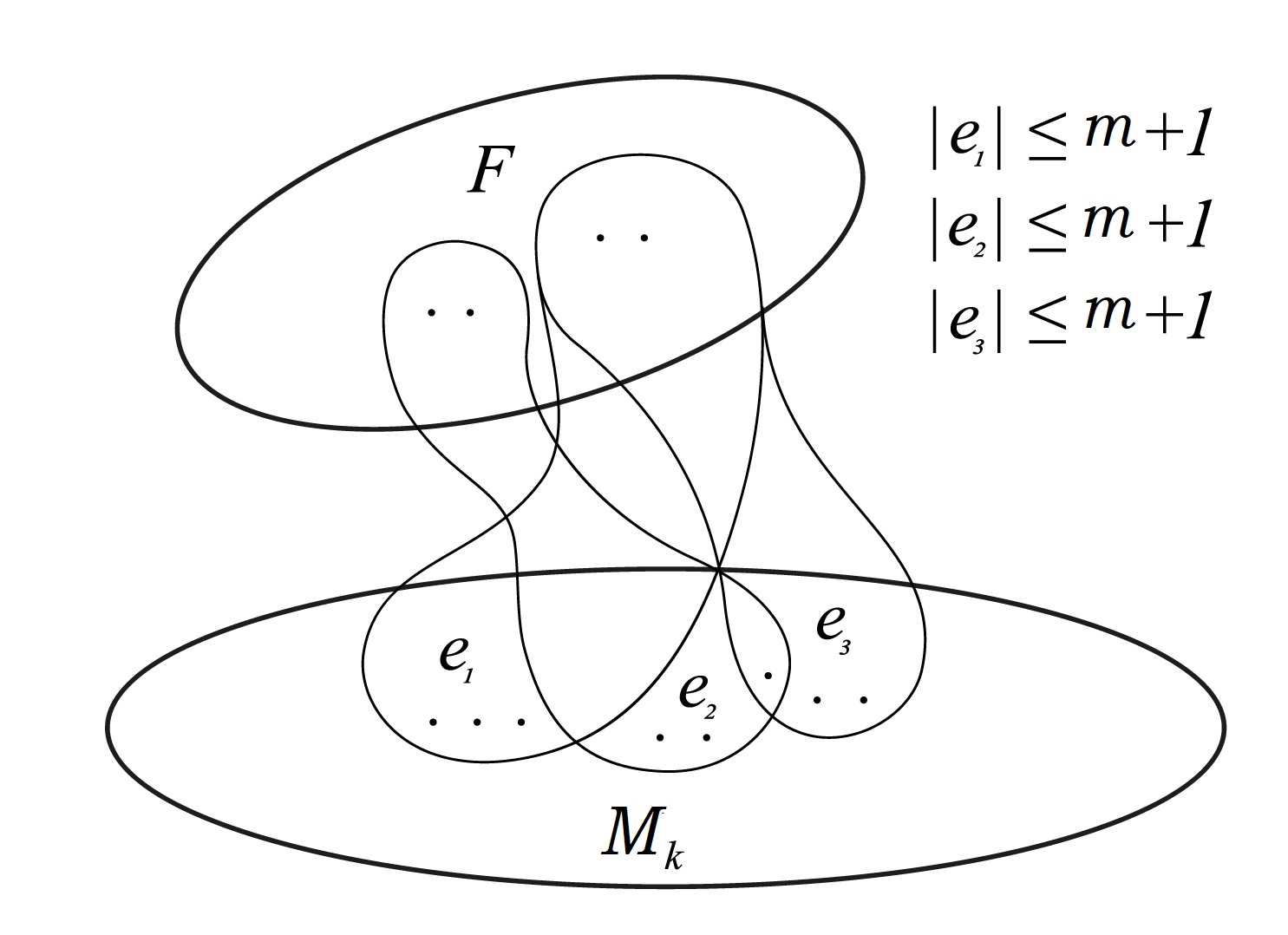}
    \caption{A hypergraph from the proof of Lemma~\ref{lemma: log residual}.}
\end{figure}

We neeed to bound the last product. To this end, note that the function that assigns to a set $F$ the value 
$(\varepsilon_m)^{|F|}$
is proportional to the $p$-biased measure $\mu_p$ with some $p$, and thus we can apply Corollary~\ref{corollary: rinott-saks p-biased} to it. Put
\begin{align*}
    \mathcal R^{[k]} := \bigvee_{K \in {[\ell] \choose k}} \bigwedge_{k' \in K} \big(\ff_{k'}\big)\big|_{R}.
\end{align*}
Note that
\begin{align*}
    \bigwedge_{k' \in K} \big(\ff_{k'}\big)\big|_{R} = \bigcap_{k' \in K} \big(\ff_{k'}\big)\big|_{R} \subset \binom{R}{\le m},
\end{align*}
and, consequently,
\begin{align*}
    \mathcal R^{[k]} \subset \binom{R}{\le {\ell \choose k} m}.
\end{align*}
Thus, using Corollary~\ref{corollary: rinott-saks p-biased} in the first inequality below, we get
\begin{align*}
    \prod_{k = 1}^\ell
        \sum_{F \in \ff_k|_\residualSet}
            \varepsilon_m^{|F|}
    & \le
    \prod_{k = 1}^\ell
    \sum_{F \in \mathcal R^{[k]}}
        \varepsilon_m^{|F|}
    \le
    \left (
        \sum_{i = 0}^{|\residualSet|}
            \binom{|\residualSet|}{i}
            \varepsilon_m^{i}
    \right )
    \prod_{k = 2}^\ell
        \binom{|R|}{\le {\ell \choose k} m} \\
    & =
    \left (
        1 +
        \varepsilon_m
    \right )^{|\residualSet|}
    \cdot
    \prod_{k = 2}^\ell
        \binom{|R|}{\le {\ell \choose k} m}.
\end{align*}
Due to Theorem~\ref{theorem: general lower bound}, $\prod_{k = 1}^\ell |\ff_k| = \Theta( n^{\sigma} 2^n)$, where $\sigma = \sum_{S \in \binom{[\ell]}{2}} \mSet_{S}$. Thus, combining the above with \eqref{eq11}, we get
\begin{align*}
    \Theta(n^\sigma 2^n)
    & =
    |R|^{O(1)}
    \left (
        1 +
        \varepsilon_m
    \right )^{|\residualSet|}
    2^{n - |\residualSet|}, \\
    \left (
        \frac 2 {1 + \varepsilon_m}
    \right )^{|\residualSet|}
    & = n^{O(1)}.
\end{align*}
Since $\varepsilon_m < 1$, the last inequality implies that
\begin{align*}
    |\residualSet|
    =
    O \left (
        \log n
    \right ).
\end{align*}
\end{proof}

\subsection{Weak upper bound}

In this section, we give a simple argument that allows to determine  the value of $s^*(n, \ell, \mSet)$ up to a constant. The argument is via an iterative application of Daykin's inequality and is due to Sergei Kiselev.

\begin{proposition}
\label{proposition: weak lower bound}
Put $\sigma = \sum_{S \in \binom{[\ell]}{2}} \mSet_{S}$. Then there is a constant $C$ depending on $\ell$ and $\mSet$ such that
\begin{align*}
    \targetFunction \le C n^\sigma 2^n.
\end{align*}
\end{proposition}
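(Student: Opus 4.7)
The plan is to iteratively apply Daykin's inequality (Theorem~\ref{theorem: Daykin's inequality}), peeling off one family at a time and collecting a factor of the form $\binom{n}{\le t}$ at each step. For $k \in [\ell]$, introduce the shorthand
\[
    \ff^{(k)} := \ff_1 \lor \ff_2 \lor \cdots \lor \ff_k, \qquad t_k := \sum_{j < k} \mSet_{j, k},
\]
so that $t_2 + t_3 + \cdots + t_\ell = \sigma$. I would then prove by induction on $k$ that
\[
    \prod_{j = 1}^k |\ff_j| \le |\ff^{(k)}| \cdot \prod_{j = 2}^k |\ff^{(j-1)} \land \ff_j|.
\]
The inductive step is a single application of Daykin to the pair $(\ff^{(k-1)}, \ff_k)$, which gives $|\ff^{(k-1)}| \cdot |\ff_k| \le |\ff^{(k)}| \cdot |\ff^{(k-1)} \land \ff_k|$.

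The key observation I would then isolate is that $\ff^{(k-1)} \land \ff_k \subset \binom{[n]}{\le t_k}$. Indeed, any $F \in \ff^{(k-1)}$ admits a decomposition $F = A_1 \cup \cdots \cup A_{k-1}$ with $A_j \in \ff_j$; hence for every $G \in \ff_k$,
\[
    |F \cap G| \le \sum_{j = 1}^{k-1} |A_j \cap G| \le \sum_{j=1}^{k-1} \mSet_{j, k} = t_k,
\]
using the $\mSet$-overlapping property pairwise. Thus $|\ff^{(k-1)} \land \ff_k| \le \binom{n}{\le t_k} = O(n^{t_k})$.

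Combining the telescoped inequality at $k = \ell$ with the trivial bound $|\ff^{(\ell)}| \le 2^n$ and the identity $\sum_{k = 2}^\ell t_k = \sigma$ produces
\[
    \prod_{k=1}^\ell |\ff_k| \le 2^n \prod_{k=2}^\ell \binom{n}{\le t_k} \le C\, n^\sigma 2^n
\]
for a constant $C = C(\ell, \mSet)$, which is exactly the claimed bound. There is essentially no obstacle in carrying this out: the only subtle point worth emphasising is that the ``inherited'' overlap bound between the accumulated union $\ff^{(k-1)}$ and $\ff_k$ grows additively with the index $k$, and it is precisely this additive accumulation $t_2 + \cdots + t_\ell = \sigma$ that yields the correct polynomial exponent.
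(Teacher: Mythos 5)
Your proposal is correct and is essentially identical to the paper's proof: both iterate Daykin's inequality to obtain the telescoped bound $\prod_k |\ff_k| \le |\bigvee_k \ff_k| \cdot \prod_{k\ge 2} |(\bigvee_{j<k}\ff_j)\wedge \ff_k|$, then bound each meet factor by $\binom{n}{\le t_k}$ using the additive accumulation of the pairwise overlap bounds. (Your write-up is if anything slightly cleaner, since you correctly write $\wedge$ in the $k=2$ factor where the paper's text has a small typo.)
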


\begin{proof}
Consider a collection of  families $\ff_1, \ldots, \ff_{\ell}$ satisfying the $\mSet$-overlapping property. We prove that
\begin{align}
\label{eq: Daykin induction}
    \prod_{k = 1}^\ell |\ff_k|
    \le
    \prod_{k = 1}^{\ell-1}
        \left |
            \left (
                \bigvee_{k' = 1}^k
                    \ff_{k'}
            \right )
            \wedge
            \ff_{k + 1}
        \right| \cdot
    \left |
        \bigvee_{k = 1}^\ell
            \ff_k
    \right |
\end{align}
by induction on $\ell$. The statement is true for $\ell = 2$ due to Theorem~\ref{theorem: Daykin's inequality}. Suppose that $\ell\ge 3$ and that the statement holds for $\ell - 1$. Then
\begin{align*}
    \prod_{k = 1}^\ell
        |\ff_k|
    \le
    \left [
        \prod_{k = 1}^{\ell - 2}
            \left |
                \left (
                    \bigvee_{k' = 1}^k
                        \ff_{k'}
                \right )
                \wedge
                \ff_{k + 1}
            \right| \cdot
        \left |
            \bigvee_{k = 1}^{\ell - 1}
                \ff_k
        \right |
    \right ]
    |\ff_\ell|.
\end{align*}
Due to Theorem~\ref{theorem: Daykin's inequality}
\begin{align*}
    \left |
        \bigvee_{k = 1}^{\ell - 1}
            \ff_k
    \right | \cdot
    |\ff_\ell|
    \le
    \left |
        \left (
            \bigvee_{k = 1}^{\ell - 1}
            \ff_k
        \right )
        \wedge
        |\ff_\ell|
    \right | \cdot
    \left |
        \bigvee_{k = 1}^\ell
            \ff_k
    \right |,
\end{align*}
which proves~\eqref{eq: Daykin induction}. Next we bound its factors. Obviously,
\begin{align*}
    |\ff_1 \bigvee \ff_2|
    \le \left | \binom{[n]}{\le \mSet_{1, 2}} \right |
\end{align*}
More generally, for any $k \in[2, \ell - 1]$ we have
\begin{align*}
    \left |
        \left (
            \bigvee_{k' = 1}^k
                \ff_{k'}
        \right )
        \wedge
        \ff_{k + 1}
    \right|
    \le
    \left |
        \binom{[n]}{
            \le \sum_{k' = 1}^{k}\mSet_{k', k + 1}
        }
    \right |.
\end{align*}
Finally,
\begin{align*}
    \left |
        \bigvee_{k = 1}^\ell
            \ff_k
    \right |
    \le 2^n.
\end{align*}
Substituting these bounds in \eqref{eq: Daykin induction}, we derive the statement of the proposition. \end{proof}

\subsection{Degrees}

In what follows, we will be extensively working with the degrees of elements w.r.t. $\ff_1,\ldots,\ff_\ell$. We use the notion of the {\it normalized degree} of an set $F$, defined as follows:
\begin{align*}
    d(F, \ff) = \frac{
        |\ff(F)|
    }{
        |\ff|
    },
\end{align*}
For brevity, we write $d_k(F)$ instead of $d(F, \ff_k)$ and $d_k(x)$ instead of $d_k(\{x\})$. 


\begin{proposition}
\label{proposition: big x fraction}
Let $\ff_1, \ldots, \ff_\ell$ be a collection of $\mSet$-overlapping families that is extremal. Then there is a set $I$ of size $O(\log n)$, such that for each $x \in [n] \setminus I$ there is $k\in [\ell]$ such that $d_{k}(x) \ge \frac 13$.
\end{proposition}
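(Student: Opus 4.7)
Take $X_1,\dots,X_\ell$ independent, each $X_k$ uniform on $\ff_k$. By Claim~\ref{claim: entropy properties}(ii), $\log_2|\ff_k|=\entropy[X_k]$, and iterated subadditivity (Claim~\ref{claim: entropy properties}(i)) gives $\entropy[X_k]\le\sum_{x\in[n]}h(d_k(x))$, where $h$ denotes the binary entropy. Summing over $k$,
\[
\sum_{k=1}^{\ell}\log_2|\ff_k|\le\sum_{x\in[n]}\sum_{k=1}^{\ell}h(d_k(x)).
\]
Combining Theorem~\ref{theorem: general lower bound} with Proposition~\ref{proposition: weak lower bound} pins the left-hand side to $n+\sigma\log_2 n+O(1)$, where $\sigma=\sum_{S\in\binom{[\ell]}{2}}\mSet_S$.

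\textbf{Per-element bound and dichotomy.} Order the degrees at $x$ as $d_{(1)}(x)\le\dots\le d_{(\ell)}(x)$. The triple tradeoff \eqref{eq: degree tradeoff} yields $d_{(\ell-2)}(x)^3\le d_{(\ell)}(x)d_{(\ell-1)}(x)d_{(\ell-2)}(x)=O(n^{-3})$, so $d_{(\ell-2)}(x)=O(n^{-1})$ and $h(d_k(x))=O(\log n/n)$ for $k$ below the top two positions. Hence
\[
\sum_{k=1}^{\ell}h(d_k(x))\le h(d_{(\ell)}(x))+h(d_{(\ell-1)}(x))+O(\log n/n).
\]
Set $I=\{x\in[n]:d_k(x)<1/3 \text{ for all } k\}$. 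For $x\in I$ both top-two degrees lie in $[0,1/3)$, so monotonicity of $h$ on $[0,1/2]$ gives $h(d_{(\ell)}(x))+h(d_{(\ell-1)}(x))\le 2h(1/3)<2$; for $x\notin I$ we use the trivial bound $\le 2$. Plugging into the entropy inequality yields $(2-2h(1/3))|I|\le n-\sigma\log_2 n+O(\log n)$, which only gives $|I|=O(n)$.

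\textbf{Main obstacle: tightening to $O(\log n)$.} The sharpening from $O(n)$ to $O(\log n)$ is the heart of the matter. My plan is to refine the good-$x$ contribution using the pairwise identity $\sum_x d_k(x)d_{k'}(x)=\EE|X_k\cap X_{k'}|\le\mSet_{k,k'}$, which holds because $|X_k\cap X_{k'}|\le\mSet_{k,k'}$ deterministically for independent uniform $X_k,X_{k'}$. Summed over pairs, $\sum_x d_{(\ell)}(x)d_{(\ell-1)}(x)\le\sigma$, and on $x\notin I$ (where $d_{(\ell)}(x)\ge 1/3$) this forces $\sum_{x\notin I}d_{(\ell-1)}(x)=O(1)$. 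Jensen's inequality on the concave $h$ then bounds $\sum_{x\notin I}h(d_{(\ell-1)}(x))=O(\log n)$, so the slack in the entropy inequality is concentrated inside $I$. Combining with Lemma~\ref{lemma: log residual} (which gives $|R|=O(\log n)$), and arguing that every $x\in M_k$ has $d_k(x)\ge 1/3$ via the down-closure $2^{M_k}\subseteq\ff_k$ together with a combinatorial bound on sets in $\ff_k(\bar x)$ that are ``blocked'' from accepting $x$ (forcing large $d_{k'}(x)$ for some $k'\ne k$), should localize $I$ essentially inside $R$ and close the gap to $|I|=O(\log n)$.
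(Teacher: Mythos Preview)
Your setup and first-round computation are correct, and you rightly identify that the naive entropy bound only gives $|I|=O(n)$. However, your proposed tightening in the ``Main obstacle'' paragraph does not close the gap; in fact it points the inequality the wrong way. After your Jensen step you have, for $x\notin I$, a total top-two contribution of at most $1+h(d_{(\ell-1)}(x))$ with $\sum_{x\notin I}h(d_{(\ell-1)}(x))=O(\log n)$, while for $x\in I$ you keep the bound $2h(1/3)$. Substituting into $n+\sigma\log_2 n+O(1)\le\sum_x[\,h(d_{(\ell)}(x))+h(d_{(\ell-1)}(x))\,]+O(\log n)$ yields
\[
n+\sigma\log_2 n+O(1)\;\le\;(n-|I|)+2h(1/3)\,|I|+O(\log n),
\]
i.e.\ $(2h(1/3)-1)\,|I|\ge \sigma\log_2 n-O(\log n)$. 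Since $2h(1/3)\approx 1.84>1$, this is a \emph{lower} bound on $|I|$, not an upper bound. The remaining route you sketch---showing directly that every $x\in M_k$ satisfies $d_k(x)\ge 1/3$ so that $I\subset R$---is not established: the inclusion $2^{M_k}\subset\ff_k$ only gives $d_k(x)\ge 2^{|M_k|-1}/|\ff_k|$, which is $1/\mathrm{poly}(n)$ in the extremal regime, and the ``blocking'' argument is left as a heuristic.

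The paper avoids this difficulty by a different decomposition. Instead of splitting $\entropy[X_k]$ over all $n$ coordinates, it splits only over the coordinates in $M_k$ and lumps the rest:
\[
\log_2|\ff_k|=\entropy[X_k]\le \sum_{i\in M_k}h\big(d_k(i)\big)+\entropy\!\left[(X_k)_{[n]\setminus M_k}\right],
\]
and the trailing term is $O(\log n)$ because $|\ff_k|_{[n]\setminus M_k}|\le 2^{|R|}\cdot n^{O(1)}$ and $|R|=O(\log n)$ by Lemma~\ref{lemma: log residual}. Summing over $k$ and using that the $M_k$ overlap in $O(1)$ elements and cover all but $|R|$ elements of $[n]$, one obtains
\[
n-O(\log n)\;\le\;\sum_{x\in[n]}\max_{k}h\big(d_k(x)\big).
\]
Now there is only \emph{one} entropy term per element, bounded by $1$, so defining $I_\varepsilon=\{x:\max_k h(d_k(x))<\varepsilon\}$ with $\varepsilon=h(1/3)<1$ immediately gives $(1-\varepsilon)|I_\varepsilon|\le O(\log n)$, hence $|I_\varepsilon|=O(\log n)$; and any $x\notin I_\varepsilon$ has some $d_k(x)\in[1/3,2/3]$. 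The key point you are missing is precisely this: use Lemma~\ref{lemma: log residual} \emph{inside} the entropy decomposition to reduce the per-element count from two surviving terms to one, rather than as a separate localization step at the end.
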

\begin{proof}
Fix some $k\in[\ell]$ and take a uniformly random set $X\in \ff_k$. Let $\mathbf v = (\mathbf v_1,\ldots, \mathbf v_n)$ be a random variable equal to the characteristic vector of $X.$ 
For a subset $S$ of $[n]$ we denote $\left ( \mathbf v_i \right )_{i \in S}$ by $\mathbf v_S$. Thus, due to Claim~\ref{claim: entropy properties} (i)
\begin{align*}
    \entropy [\mathbf v] \le
    \sum_{i \in M_k}
    \entropy \left [
        \mathbf v_i
    \right ]
    + \entropy \left [ \mathbf v_{[n] \setminus \setCenter_k} \right].
\end{align*}
It easy to see that $\mathbf v_i = 1$ with probability $d_k(i)$. Moreover, by Claim~\ref{claim: entropy properties} (ii) we have 
$ \entropy [\mathbf v_{[n] \setminus \setCenter_k}] \le \log_2 \left  | \ff_k |_{[n] \setminus \setCenter_k} \right |$. Obviously, $\left |\ff_k |_{[n] \setminus \setCenter_k} \right | \le 2^{|\residualSet|} \left | \ff_k |_{\bigcup_i \setCenter_i \setminus \setCenter_k} \right |$ and, thus, $\log_2 \left  | \ff_k |_{[n] \setminus \setCenter_k} \right | \le |\residualSet| + O(\log n) = O(\log n)$ due to Lemma~\ref{lemma: log residual}. Because $\entropy [\mathbf v] = \log_2 |\ff_k| \ge |\setCenter_k|$, we observe
\begin{align*}
    |\setCenter_k|
    \le
    \sum_{i \in \setCenter_k}
        h_2 \left (
            d_k(x)
        \right )
    +
    O(\log n),
\end{align*}
where $h_2(p) = - p \log_2 p - (1-p) \log_2 (1-p)$ is the binary entropy.
Using the fact that $|\bigcup_{k\ne k'} \setCenter_k\cap \setCenter_{k'}|\le {\ell\choose 2} = O(1)$, we get
\begin{align*}
    \left |
        \bigcup_{k = 1}^\ell
            \setCenter_k
    \right |\le\sum_{k=1}^\ell |\setCenter_k|
    \le
    \sum_{i \in [n] \setminus \residualSet}
        \max_k h_2 \left (
            d_k(i)
        \right )
    +
    O(\log n).
\end{align*}
Using Lemma~\ref{lemma: log residual}, we get
\begin{align*}
    n \le
    \sum_{i = 1}^n
        \max_k
        h_2 \left (
            d_k(i)
        \right )
    +
    O(\log n)
\end{align*}
For each real-valued $\varepsilon\in [0,1]$ define $I_{\varepsilon}$ as
\begin{align*}
    I_\eps :=
    \left \{
        i \in [n]
        \mid
        \max_k
        h_2 \left (
            d_k(i)
        \right )
        < \varepsilon
    \right \}.
\end{align*}
Since $h_2(p) \le 1$ for any $0\le p\le 1,$ we obtain
\begin{align*}
     n - O(\log n) & \le \varepsilon |I_\varepsilon| + (n - |I_\varepsilon|),\\
    |I_\varepsilon| & \le \frac{O(\log n)}{1 - \varepsilon}.
\end{align*}
Note that $h_2(1/3) = \log_2 3-2/3$. Putting $\varepsilon = \log_2 3 - 2/3$ in the expression above, we get that for each $x\in[n] \setminus I_{\varepsilon}$ there is a family of sets $\ff_k$ such that $d_k(x) \ge \frac{1}{3}$ and that $|I_{\varepsilon}|=O(\log n)$.
\end{proof}

In addition, we observe the following property of degrees:

\begin{proposition}
\label{proposition: multiplicative property of degrees}
Let $\ff_1, \ldots, \ff_\ell$ be a collection of $\mSet$-overlapping families that is extremal. Then for any subset of indices of $K \subset [\ell]$ and sets $F_k \in \ff_k$, $k \in K$
\begin{align*}
    \prod_{k \in K} d_k(F_k) \le C_D
        n^{- \sum_{\{k, k'\} \in \binom{K}{2}} |F_k \cap F_{k'}| },
\end{align*}
where $C_D$ is some constant depending on $\mSet$, $\ell$.
\end{proposition}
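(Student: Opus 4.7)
The plan is to reduce the statement to the weak upper bound of Proposition~\ref{proposition: weak lower bound} applied to a suitably modified collection of families, and then divide by the lower bound from Theorem~\ref{theorem: general lower bound}. Concretely, fix a collection $(F_k)_{k \in K}$ with $F_k \in \ff_k$, and form the new collection obtained by replacing, for each $k \in K$, the family $\ff_k$ by $\ff_k(F_k) = \{G \subseteq [n]\setminus F_k : F_k \cup G \in \ff_k\}$ (viewed as a family of subsets of $[n]$), and leaving $\ff_{k''}$ unchanged for $k'' \in [\ell]\setminus K$.

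The key step is to verify that this new collection satisfies an $\mSet'$-overlapping property on $[n]$, where $\mSet'_{k,k'} = \mSet_{k,k'} - |F_k \cap F_{k'}|$ for $k, k' \in K$ and $\mSet'_{k,k'} = \mSet_{k,k'}$ otherwise. The only case that requires thought is $k, k' \in K$: if $G \in \ff_k(F_k)$ and $G' \in \ff_{k'}(F_{k'})$, then $F := F_k \sqcup G \in \ff_k$ and $F' := F_{k'} \sqcup G' \in \ff_{k'}$, so $|F \cap F'| \le \mSet_{k,k'}$. Since $G \cap F_k = \emptyset$, the sets $F_k \cap F_{k'}$ and $G \cap G'$ are disjoint subsets of $F \cap F'$, yielding $|G \cap G'| \le \mSet_{k,k'} - |F_k \cap F_{k'}|$. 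For $k \in K$ and $k'' \notin K$, the bound $|G \cap F''| \le |F \cap F''| \le \mSet_{k,k''}$ is immediate; for $k'', k''' \notin K$ nothing changes.

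Having verified this, Proposition~\ref{proposition: weak lower bound} yields
\begin{align*}
    \prod_{k \in K} |\ff_k(F_k)| \cdot \prod_{k'' \notin K} |\ff_{k''}|
    \le
    C_1 \cdot n^{\sigma'} \cdot 2^n,
    \qquad
    \sigma' = \sigma - \sum_{\{k,k'\} \in \binom{K}{2}} |F_k \cap F_{k'}|.
\end{align*}
On the other hand, extremality of $\ff_1,\ldots,\ff_\ell$ together with Theorem~\ref{theorem: general lower bound} gives $\prod_{k \in [\ell]} |\ff_k| \ge c \cdot n^{\sigma} \cdot 2^n$ for some constant $c > 0$. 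Dividing the two bounds yields
\begin{align*}
    \prod_{k \in K} d_k(F_k)
    =
    \frac{\prod_{k \in K} |\ff_k(F_k)| \cdot \prod_{k'' \notin K} |\ff_{k''}|}{\prod_{k \in [\ell]} |\ff_k|}
    \le
    \frac{C_1}{c} \cdot n^{-\sum_{\{k,k'\} \in \binom{K}{2}} |F_k \cap F_{k'}|},
\end{align*}
which is the claimed estimate with $C_D = C_1/c$.

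The argument has no serious obstacle: the only delicate point is the disjointness observation in the second paragraph, which crucially uses that sets of $\ff_k(F_k)$ live in $2^{[n]\setminus F_k}$. Note that we do \emph{not} pass to the smaller ground set $[n]\setminus\bigcup_{k\in K}F_k$, which would force an unaffordable multiplicative loss when the $F_k$'s are large; keeping the ground set $[n]$ and merely decrementing the overlapping vector by $|F_k\cap F_{k'}|$ is exactly what makes the bound tight.
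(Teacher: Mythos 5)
Your argument is correct and is exactly the paper's proof: replace $\ff_k$ by $\ff_k(F_k)$ for $k\in K$, verify the $\mSet'$-overlapping property with $\mSet'_{k,k'}=\mSet_{k,k'}-|F_k\cap F_{k'}|$, apply the weak upper bound of Proposition~\ref{proposition: weak lower bound}, and divide by the lower bound of Theorem~\ref{theorem: general lower bound} via extremality. You merely spell out the disjointness verification that the paper leaves implicit.
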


\begin{proof}
If for some $k, k'$ we have $F_{k}$, $F_{k'}$ such that $|F_{k} \cap F_{k'}| > \mSet_{k, k'}$, then either $d_{k}(F_k) =0$ or $d_{k'}(F_{k'}) = 0$ and the inequality is trivial. Thus, assume $|F_{k} \cap F_{k'}| \le \mSet_{k, k'}$ for each $k, k'$.
Consider families $\ff_k$, $k \in [\ell] \setminus K$, and $\ff_k(F_k)$, $k \in K$ as families in $2^{[n]}$. They satisfy the $\mSet'$-overlapping property, where
\begin{align*}
    \mSet_{S}' =
    \begin{cases}
    \mSet_{S}, & S \not \subset K \\
    \mSet_{S} - \left | \bigcap_{s \in S} F_s \right |, & S \subset K.
    \end{cases}
\end{align*}
According to Theorem~\ref{proposition: weak lower bound},
\begin{align*}
    \prod_{k \in [\ell] \setminus K} |\ff_k| \prod_{k \in K} |\ff_k(F_k)| = O \left (n^{\sigma - \sum_{\{k, k'\} \in \binom{K}{2}}  |F_k \cap F_{k'}|} 2^n \right ).
\end{align*}
Meanwhile, due to Theorem~\ref{theorem: general lower bound}
\begin{align*}
    \prod_{k \in [\ell]} |\ff_k| = \Theta(n^{\sigma} 2^n),
\end{align*}
and, consequently,
\begin{align*}
    \prod_{k \in K} d_k(F_k) = O(1) \cdot n^{- \sum_{\{k, k'\} \in \binom{K}{2}}  |F_k \cap F_{k'}| }.
\end{align*}
\end{proof}

\subsection{Proof}

Proposition~\ref{proposition: multiplicative property of degrees} implies the following Lemma which is crucial for the understanding the asymptotic of $\targetFunction$:

\begin{lemma}
\label{lemma: element excluding}
Let $\ff_1, \ldots, \ff_\ell$ be families from the extremal example. Then there are subfamilies $\ff_k' \subset \ff_k$, $k \in [\ell]$ such that
\begin{enumerate}
    \item Every element $x \in [n]$ is contained in at most two subfamilies $\ff_{k}'$'s.
    \item For every $k\in[\ell]$ it holds that $|\ff'_k| \ge (1 - \delta_n) |\ff_k|$, where $\delta_n = O(n^{-1/2})$.
\end{enumerate}
\end{lemma}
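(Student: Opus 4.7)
The plan is to prune each $\ff_k$ to the subfamily of sets whose ground set is confined to a carefully chosen $Y_k \subseteq [n]$, designed so that each element $x$ lies in at most two of the $Y_k$'s. For $\ell \le 2$ the statement is trivial (take $\ff_k' = \ff_k$), so assume $\ell \ge 3$. For each $x \in [n]$, sort the degrees as $d_{(1)}(x) \le \ldots \le d_{(\ell)}(x)$, let $(k)(x)$ denote the index of the family contributing $d_{(k)}(x)$, and set
\begin{align*}
K(x) := \{(\ell - 1)(x),\, (\ell)(x)\}, \qquad Y_k := \{x \in [n] : k \in K(x)\}, \qquad \ff_k' := \{F \in \ff_k : F \subseteq Y_k\}.
\end{align*}
Any set of $\ff_k'$ lies in $Y_k$, so if an element $x$ appears in some set of $\ff_k'$ then $k \in K(x)$; since $|K(x)| \le 2$, property (1) holds automatically.

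For property (2), we estimate
\begin{align*}
|\ff_k \setminus \ff_k'| \le \sum_{x \notin Y_k} |\ff_k(x)| = |\ff_k| \sum_{x : k \notin K(x)} d_k(x) \le |\ff_k| \sum_{x \in [n]} d_{(\ell - 2)}(x),
\end{align*}
using that $k \notin K(x)$ forces $d_k(x) \le d_{(\ell - 2)}(x)$. The task reduces to showing $\sum_{x \in [n]} d_{(\ell - 2)}(x) = O(n^{-1/2})$. To that end, we apply Proposition~\ref{proposition: multiplicative property of degrees} to the three indices $(\ell - 2)(x), (\ell - 1)(x), (\ell)(x)$ with the singletons $F_k = \{x\}$; since any two of these singletons intersect in exactly one element, we obtain
\begin{align*}
d_{(\ell)}(x) \cdot d_{(\ell - 1)}(x) \cdot d_{(\ell - 2)}(x) = O(n^{-3}).
\end{align*}

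We then split the sum over $x$ into the exceptional set $I$ from Proposition~\ref{proposition: big x fraction} (of size $O(\log n)$) and its complement. For $x \notin I$ we have $d_{(\ell)}(x) \ge 1/3$, whence $d_{(\ell - 1)}(x) d_{(\ell - 2)}(x) = O(n^{-3})$, and the monotonicity $d_{(\ell - 1)}(x) \ge d_{(\ell - 2)}(x)$ yields $d_{(\ell - 2)}(x) = O(n^{-3/2})$. For $x \in I$ only the trivial bound $d_{(\ell)}(x) \le 1$ is at hand, but combining it with the monotonicity of sorted degrees gives $d_{(\ell - 2)}(x)^3 \le d_{(\ell - 2)}(x) d_{(\ell - 1)}(x) d_{(\ell)}(x) = O(n^{-3})$, hence $d_{(\ell - 2)}(x) = O(n^{-1})$. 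Summing,
\begin{align*}
\sum_{x \in [n]} d_{(\ell - 2)}(x) \le n \cdot O(n^{-3/2}) + |I| \cdot O(n^{-1}) = O(n^{-1/2}),
\end{align*}
which gives $|\ff_k'| \ge (1 - O(n^{-1/2})) |\ff_k|$, as needed. The main obstacle we anticipate is the treatment of the exceptional set $I$: a naive estimate $d_k(x) \le 1$ for $x \in I$ would cost a term of order $|I| = \Theta(\log n)$ in each family and overwhelm the bound; the crucial point is that the three-way degree trade-off supplies a nontrivial $O(n^{-1})$ decay for \emph{every} $x$, so the contribution of $I$ is only $O(|I|/n) = o(n^{-1/2})$.
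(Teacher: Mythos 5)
Your proof is correct and follows essentially the same route as the paper's: prune each $\ff_k$ down to sets supported on the elements where $\ff_k$ has one of the two largest degrees, and use Proposition~\ref{proposition: multiplicative property of degrees} together with Proposition~\ref{proposition: big x fraction} to bound the loss. The paper phrases the pruning via sets $W_k = \{x : d_k(x) \text{ is small}\}$ and takes $\ff_k' = \ff_k(\overline{W_k})$, which coincides with your $\{F \in \ff_k : F \subseteq Y_k\}$; your use of the sorted-degree notation $d_{(k)}(x)$, which the paper itself introduces in its sketch section, makes the case analysis for $x \in I$ versus $x \notin I$ a bit more streamlined but is not a different argument.
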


\begin{proof}
Given $x \in [n]$, consider two cases: $x \in I$ and $x \not \in I$, where $I$ is defined as in Proposition~\ref{proposition: big x fraction}. If $x \in [n] \setminus I$, then there is $k^* \in [\ell]$ such that $d_{k^*}(x) \ge \frac{1}{3}$. Consider any subset $K$ of $[\ell] \setminus \{k^*\}$ of cardinality $2$. From Proposition~\ref{proposition: multiplicative property of degrees} we get
\begin{align*}
    d_{k^*}(x) \prod_{k \in K} d_k(x) = O\big(n^{-3}\big).
\end{align*}
Thus, there is $k \in K$ such that
\begin{align*}
    d_k(x) = O\big(n^{-3/2}\big).
\end{align*}
It implies that there are $\ell - 2$ families in which $x$ has normalized degree $ O\big(n^{-3/2}\big)$. We put
\begin{align*}
    T_x = \left \{
        k
        \mid
        d_k(x)  =O\big(n^{-3/2}\big)
    \right \}.
\end{align*}

If $x \in I$, there may be no $k^* \in K$ such that $d_{k^*}(x) \ge 1/3$. Thus, consider an arbitrary set $K \subset [\ell]$ of cardinality $3$. Then
\begin{align*}
    \prod_{k \in K} d_k(x) = O\big(n^{-3}\big),
\end{align*}
and consequently, there is $k \in K$ such that
\begin{align*}
    d_k(x) =O\big( n^{-1}\big).
\end{align*}
Since $K$ is arbitrary, there are at least $\ell - 2$ families for which this inequality holds. We put
\begin{align*}
    S_x = \left \{
        k
        \mid
        d_k(x) = O\big( n^{-1}\big)
    \right \}.
\end{align*}
Put $W_k = \{x \mid k \in T_x \cup S_x \}$. Define $\ff_k' = \ff_k(\overline{W_k})$. Then
\begin{align*}
    \frac{
        \left |
            \ff_k'
        \right |
    }{
        |\ff_k|
    }
    & \ge 1 - \left (
        \left |
            \{ x \mid k \in S_x \}
        \right |
        O\big(n^{-1}\big)
        +
        \left |
            \{x \mid k \in T_x\}
        \right |
        O\big(n^{-3/2}\big)
    \right ) \\
    & \ge 1 - O\left (
        \frac{|I|}{n}
        + n^{-1/2}
    \right ) =: 1 - \delta_n.
\end{align*}
Since $|I| = O(\log n)$ due to Proposition~\ref{proposition: big x fraction}, we obtain $\delta = O(n^{-1/2})$.
\end{proof}

We are ready to prove the upper bound. 

\begin{proof}[Proof of the upper bound in Theorem~\ref{theorem: maximal families structure}]
Let $\ff_{1}', \ldots, \ff_\ell'$ be the families from Lemma~\ref{lemma: element excluding}. Then
\begin{align}
\label{eq: primed upper bound}
    \prod_{k = 1}^l |\ff_k| \le (1 - \delta_n)^{- \ell} \prod_{k = 1}^l |\ff_k'|
\end{align}
and each $\{x\}$, $x \in [n]$, is contained in the sets from at  most two families $\ff_k'$. Thus, for each $S \subset [l]$ with $|S| > 2$ we have
\begin{align}
    \bigwedge_{k \in S} \ff_k' = \varnothing.
\end{align}
Consequently, the sets $\support (\ff_k \land \ff_{k'})$ are disjoint for different pairs $\{k, k'\}$, where $\support \ff = \{x \in [n] \mid \{x\} \in \ff\}$. Hence, we can use Corollary~\ref{corollary: rinott sets theorem} and obtain
\begin{align*}
    \prod_{k = 1}^l |\ff_k'|
    \le
    \left |
        \bigvee_{1 \le k < k' \le l}
            (\ff_k' \land \ff_{k'}')
    \right |
    \left |
        \bigvee_{k = 1}^l
            \ff_k'
    \right |
    \le
    \prod_{1 \le k < k' \le \ell}
        \binom{
            |\support (\ff_k' \land \ff_{k'}')|
        }{
            \le \mSet_{k, k'}
        }
    2^n
\end{align*}
where the last inequality is due to the following obvious fact:
\begin{align*}
    \ff_k' \land \ff_{k'}' \subset \binom{\support (\ff_k' \land \ff_{k'}')}{\le \mSet_{k, k'}}.
\end{align*}

Optimizing over the choices for cardinalities of supports of $\ff'_k \wedge \ff_k$ leads us to the optimization problem~\eqref{eq: optimization problem}. As a reminder, it is formulated as follows:
\begin{align*}
    \solution & =
    \max_{\nv}
    \prod_{S \in \binom{[\ell]}{2}}
        \binom{
            \nv_S
        }{
            \le t
        } \\
    & \text{ s.t. }
    \sum_{S \in \binom{[\ell]}{2}}
        \nv_S
    = n.
\end{align*}

The asymptotics of the solution was obtained in the proof of Theorem~\ref{theorem: general lower bound}. Substituting the obtained bound on $\solution$ into~\eqref{eq: primed upper bound} concludes the proof.
\end{proof}



\section{Acknowledgement}
We would like to thank Peter Frankl for sharing the proof of Theorem~\ref{theorem: Frankl's theorem} and Sergei Kiselev for actively participating in the project on its early stages, and in particular for suggesting the proof of Theorem~\ref{proposition: weak lower bound}. The research was in part supported by the RSF grant N 22-11-00131.

\end{document}